\newif\ifPreprint \Preprinttrue
\newif\ifSubmission \Submissionfalse
\patchcmd{\@settitle}{\uppercasenonmath\@title}{\scshape\large}{}{}
\patchcmd{\@setauthors}{\MakeUppercase}{\scshape\normalsize}{}{}
\theoremstyle{plain}
\newtheorem{theorem}{Theorem}[section]
\newtheorem{lemma}[theorem]{Lemma}
\newtheorem{corollary}[theorem]{Corollary}
\theoremstyle{definition}
\newtheorem{example}[theorem]{Example}
\newtheorem{remark}[theorem]{Remark}
\theoremstyle{remark}
\newcommand{\defsep}{\colon}
\newcommand{\Unc}{\mathcal{U}}
\newcommand{\st}{\text{s.t.}}
\newcommand{\C}{\mathcal{C}}
\newcommand{\X}{\mathcal{X}}
\newcommand{\R}{\mathbb{R}}
\newcommand{\Nc}{\mathcal{N}}
\newcommand{\T}{\mathcal{T}}
\newcommand{\tu}{\tilde{u}}
\newcommand{\tauU}{\tau({\Unc})}
\newcommand{\fa}{\text{ for all }}
\newcommand{\varc}{c^{\text{var}}}
\newcommand{\invc}{c^{\text{inv}}}
\newcommand{\boxv}{\text{val}_{\mathcal{B}}}
\newcommand{\robv}{\text{val}_{\text{R}}}
\newcommand{\ARv}{\text{val}_{\text{AR}}}
\newcommand{\crobv}{\text{cval}_{\text{R}}}
\newcommand{\cARv}{\text{cval}_{\text{AR}}}
\newcommand{\boxtv}{\text{val}_{\tilde{\mathcal{B}}}}
\newcommand{\bdt}{\boldsymbol{\cdot}}
\newcommand{\RobWel}{C_R}
\newcommand{\ARobWel}{C_{AR}}
\DeclareMathOperator*{\argmin}{arg\,min}
\DeclareMathOperator{\diag}{diag}
\DeclareMathOperator{\mvar}{MVaR}
\DeclareMathOperator{\var}{VaR}
\newcommand{\rev}[1]{#1} 
\newcommand{\revv}[1]{#1} 
\begin{document}

\title[Robust Market Equilibria under Uncertain Cost]%
{Robust Market Equilibria under Uncertain Cost}


\author[C. Biefel, F. Liers, J. Rolfes, L. Schewe, G. Zöttl]%
{Christian Biefel, Frauke Liers, Jan Rolfes, \\Lars Schewe, Gregor Zöttl}

\address[C. Biefel, F. Liers, J. Rolfes]{%
  Friedrich-Alexander-Universität Erlangen-Nürnberg,
  Germany, Department of Data Science, Cauerstr. 11, 91058 Erlangen;
  Energie Campus Nürnberg,
  Fürther Str. 250,
  90429 Nürnberg,
  Germany}
\email{\{christian.biefel,frauke.liers,jan.rolfes\}@fau.de}

\address[L. Schewe]{%
University of Edinburgh,
  School of Mathematics and Maxwell Institute for Mathematical Sciences,
  James Clerk Maxwell Building,
  Peter Guthrie Tait Road,
  Edinburgh, EH9 3FD, UK;
  Energie Campus Nürnberg,
  Fürther Str. 250,
  90429 Nürnberg,
  Germany}
\email{lars.schewe@ed.ac.uk}

\address[G. Zöttl]{%
  Friedrich-Alexander-Universität Erlangen-Nürnberg,
  Germany, Institute of Economic Research, Lange Gasse 20, 90403 Nürnberg;
  Energie Campus Nürnberg,
  Fürther Str. 250,
  90429 Nürnberg,
  Germany}
\email{gregor.zoettl@fau.de}

\date{\today}

\begin{abstract}
\revv{This work studies} equilibrium problems under uncertainty where firms
maximize their profits in a robust way when selling their output. Robust optimization plays an increasingly important role when best guaranteed objective values are to be determined, independently of the specific distributional assumptions regarding uncertainty. In particular, solutions are to be determined that are feasible regardless of how the uncertainty manifests itself within some predefined uncertainty set. 
Our \rev{mathematical} analysis adopts the robust optimization perspective in the context of equilibrium problems. First, we \rev{present structural insights for} a single-stage,  nonadjustable robust setting. We then go one step further and study the more complex two-stage or adjustable case where a part of the variables can adjust to the realization of the uncertainty. We compare equilibrium outcomes with the corresponding centralized
robust optimization problem where the sum of all profits are maximized. As we find, the market equilibrium for the perfectly competitive firms differs from the solution of the robust central planner, which is in stark contrast to classical results regarding the efficiency of market equilibria with perfectly competitive firms. For the different scenarios considered, we furthermore are able to determine the resulting price of anarchy. In the case of non-adjustable robustness, for fixed demand in every time step the price of anarchy is bounded whereas it is unbounded if the buyers are modeled by elastic demand functions. For the two-stage adjustable setting, we show how to compute subsidies for the firms that lead to robust welfare optimal equilibria. \\

\noindent \textsc{Keywords.} Robustness and sensitivity analysis;
Equilibrium Problems;
Robust Optimization; %
Adjustable Robustness\\

\noindent \textsc{Corresponding author.} Christian Biefel - christian.biefel@fau.de

\end{abstract}

\maketitle

\section{Introduction}
\label{sec:introduction}

Equilibrium problems arise whenever several agents which interact in a common context maximize their own objectives. An equilibrium then corresponds to a 'stable' situation in which no agent has an incentive to deviate from his or her optimum strategy, given the strategies of all other agents. Such tasks appear in wide contexts, for example in market interactions, in transportation, and the like. Many insights have already been obtained for the case of deterministic and risk neutral settings. In many situations however, uncertainty and risk play an essential role for the optimization problems of the different agents and the resulting equilibria. Indeed, when neglecting the influence of uncertainty, solutions may differ considerably, and so it is advisable to hedge against them. 

Recently, the explicit consideration of uncertainties in equilibrium problems has obtained increased attention. Most of those contributions model the uncertainties from a stochastic optimization perspective. Stochastic optimization~(\cite{Birge_Louveaux:2011}) protects against uncertainties with a certain probability and in expectation. A prominent approach, for example, consists in the introduction of risk functions which allows to take into account risk aversion (for the seminal work compare \cite{Artzneretal:1999}).  

A stochastic optimization approach naturally assumes that information on the underlying probability distributions is available. In many settings, however, such knowledge may not be available, or, in safety-critical situations, protection in a probabilistic sense may not be enough. Furthermore, the resulting problems are not necessarily algorithmically tractable. In such situations, a different modelling approach is appropriate that has its foundations in robust optimization.

In robust optimization~(\cite{Ben-Tal_et_al:2009}), the task is to ensure feasibility of a solution regardless of how uncertainties manifest themselves within predefined uncertainty sets. Among all such robust feasible solutions, a robust optimum is determined that yields best guaranteed objective values. A main challenge in this very active research area consists in modelling or reformulating the robust counterpart problem in a way that leads to algorithmical tractability. Another topic of research studies how conservative the robust solutions are, i.e., the cost of robust protection when compared to the unprotected case in which uncertainties are ignored. 

In the present article, we thus \rev{present structural insights into} equilibrium problems where firms in a market context maximize their profits in a robust way when selling their output. Firms optimize their investment and production decisions for several periods of time facing uncertain production costs. We first \rev{analyze} the single-stage or non-adjustable robust setting, where the firms' production decisions are determined by the original output decision for all periods of time and for all realizations of cost uncertainty. We then go one step further and study the more complex two-stage or adjustable case where firms first observe the realizations of uncertainty and can then adjust their production choices that are bounded by the originally made investments. The adjustable case is typically less conservative than the one-stage problem. 
We \rev{derive analytical results and} compare equilibrium outcomes with the corresponding robust central planner solution where the worst case sum of all profits is maximized. For our analysis we consider the case of perfectly competitive firms which act as price takers. 
We establish existence  of the resulting robust equilibrium problems and characterize them.

As we find, the market equilibrium for the perfectly competitive firms differs from the solution of the robust central planner, which is in stark contrast to classical results regarding the efficiency of market equilibria with perfectly competitive firms, compare e.g. \cite{JoskowTirole2007} or \cite{Zoettl:2010}.
For the different scenarios considered we furthermore are able to determine the resulting price of anarchy (PoA), see e.g. \cite{Koutsoupias1999} or \cite{Dubey1986} for seminal contributions.
In the case of non-adjustable robustness, for fixed demand in every time step the price of anarchy is bounded whereas it is unbounded if the buyers are modeled by elastic demand functions.
In the case of adjustable robustness, we derive an approach to compute subsidies for the firms that lead to welfare optimal equilibria in the robust market. As a direct application of our results we consider a setting where the market participants aim to optimize their respective values at risk, instead of their worst-case production costs. Here, the values at risk of the market participants determine the uncertainty set for our robust problem. We summarize our findings in \Cref{overviewtable}.

Next, we add on the brief overview of the related literature.
For the case of deterministic or risk-neutral optimization, our framework has already been well studied in the context of the so called peak--load pricing literature, compare e.g. \cite{MurphySmeers:2005}, \cite{JoskowTirole2007},  or \cite{Zoettl:2010}. 

More recently, a large strand of contributions has developed which considers stochastic equilibrium problems. \cite{EhrenmannSmeersOR:2011} for example characterize market equilibria in the case of risk averse firms in an adjustable peak-load-pricing framework. \cite{RalphSmeers:2016} include the possibility of tradable risk, which allows to reconcile risk neutral and risk averse behavior of firms. Along similar lines, results are obtained by \cite{Philpottetal:2016}. Moreover, \cite{SchiroHobbsPang:2016} provides general existence results of competitive equilibrium under risk aversion. \cite{Pangetal:2017} analyze the case of strategically behaved firms. Finally, \cite{GERARDetal:2018} show for the case of perfectly competitive firms that market equilibria for risk averse agents are not unique.

As argued above, we protect against uncertainties in a robust manner. \revv{This} work thus contributes to the recent strand of literature which seeks to explicitly introduce robust optimization in the context of equilibrium problems. Equilibrium problems where players address uncertainties from a robustness perspective play an increasingly important role in the literature. For a seminal contribution in this context consider e.g. \cite{AghassiBertsimas:2006}. Several recent articles consider different  robust equilibrium problems under incomplete information, compare e.g.  \cite{DeyandZaman:2020}, or \cite{Franzeresetal:2019} for robust bidding in auctions.
\rev{The article~\cite{MR2879331} considers a robust optimization model for a market similar to ours in continuous time, but does not include investment decisions.}

Another area of research considers robust linear complementarity problems, short LCPs, which are suited to describe strictly robust equilbrium problems. 
In this context consider
the seminal contributions by \cite{Wu_et_al:2011} and \cite{Xie_Shanbhag:2016}, and more recently \cite{KrebsSchmidt2020OMS},  or \cite{Krebsetal2021} which introduce a less conservative nonadjustable robust approach to LCPs. 
In \cite{Biefeletal:2020}, the authors consider uncertain linear complementarity under the assumption of affinely adjustable robustness and derive characterizations for conditions under which a solution exists and is unique. However, the results apply to general LCPs and do not 
answer questions on the market equilibria considered in the present work, for example the comparison of
an equilibrium to the central planner optimum.

Finally, in recent contributions, \cite{Kramer_et_al:2018} and \cite{ccelebi2021gamma} consider robust investment and production decisions of firms in the context of electricity markets. In the setup considered, uncertainty affects only those parts of the market environment which affect all firms symmetrically (i.e. in their context common market demand is subject to uncertainties). The latter allows to obtain the equivalence of competitive market equilibrium and the corresponding system optimal benchmark. Moreover, \cite{ccelebi2021gamma} focus on the single level-nonadjustable case, whereas we consider the (usually complex but typically less conservative) adjustable setting. In the context of our setup uncertainties are affecting firms in an asymmetric way (i.e. each firm's production cost). This breaks the equivalence of equilibrium and the corresponding system optimum, as we show, however. 
 
\begin{table}[h!]
  \caption{Overview of the theoretical results (PoA: Price of Anarchy, CP: Central Planner)}\label{overviewtable}
  \small{
  \begin{tabular}{ | l | l | l |}
    \hline
    &Strict robustness&Adjustable robustness\\
    \hline
    Fixed demand&PoA bounded:&Strict and adjustable CP have\\
    \Cref{sec:fixed-demand}&\Cref{Thm:fixed:main-bound}&equal objective value: \Cref{Thm:fixed:ARobustCPEquivalence}\\
    &&\\
    &&Examples showing PoA$>1$\\
    \hline
    Elastic demand&PoA unbounded:&Strict and adjustable CP have\\
    \Cref{sec:elastic-demand}&\Cref{Thm:elastic:poaunbounded}&equal objective value: \Cref{Thm:elastic:ARobustCPEquivalence}\\
    &&\\
    &&Examples showing PoA$>1$\\
    &&\\
    &&Subsidies for optimal equilibrium: \Cref{Thm:elastic:SubsidiesEq}\\
    \hline
  \end{tabular}\vspace{0.3cm}}
  \end{table}

The outline of \revv{this} work is as follows. In \Cref{sec:problem-statement}, we introduce the nominal peak load pricing model. In \Cref{sec:robust-lin-opt}, we relate the optimum robust objective function value of robust linear optimization problems with polyhedral uncertainty set to the similar problem but with box uncertainty sets. The results are necessary for determining the price of anarchy in later sections. In \Cref{sec:fixed-demand}, we explain the robust peak load pricing model and use the results from \Cref{sec:robust-lin-opt} to quantify the relation between the robust central planner solution and the robust market outcome, i.e., we quantify the price of anarchy. The results are given for fixed as well as for elastic demand, see \Cref{sec:elastic-demand}, and for the non-adjustable \rev{as well as for the} adjustable problem. In \Cref{sec:var}, we apply the results to coherent and non-coherent risk measures, particularly to the value at risk and point out relations to data driven coherent risk measures. We end with conclusions in section \ref{sec:conclusion}. 


\section{Nominal Peak Load Pricing}
\label{sec:problem-statement}

Following standard electricity market models, we consider a market taking place in $T$ time periods, where $\T=\{1,...,T\}$. We assume perfect competition, i.e. all players are price takers.

On the production side of the market, there are different producers $i\in\Nc=\{1,...,N\}$. 
In order to be able to produce the considered good (for example electricity), a producer $i$ needs to build capacity $y_i\geq 0$, where for each newly installed capacity unit, investment costs $\invc_i$ need to be paid. 
In each time period $t\in \T$, the producers decide on their production $x_{i,t}$ that may not exceed their 
respective capacities. The producers have to pay their variable costs $\varc_{i}$, 
but receive for each produced unit some exogenously given market price $\pi_t$. 
The goal of each producer is to maximize his or her profit. 
Hence, the optimization problem of producer $i$ reads
\begin{subequations}\label{eq:Producers}
\begin{align}
  \max_{x_i,y_i\geq 0}~&\sum_{t\in \T}((\pi_t-\varc_i)x_{i,t})-\invc_iy_i\\
  \st~&x_{i,t}\leq y_i,\quad\quad t\in \T.
\end{align}
\end{subequations}

We address two different cases for the demand side of the market. First, in \Cref{sec:fixed-demand}, we consider the case of fixed demand. Here, in each time period $t\in \T$ a fixed demand $d_t$ is given. 
An equilibrium $(\pi,x,y)$ is a set of prices, production choices and investments 
such that the optimality conditions of all players and the market clearing conditions
\begin{align}\label{eq:MCfix}
    &d_t=\sum_{i\in\Nc}x_{i,t}, \quad\quad t\in\T
\end{align}
are fulfilled. In the following, we denote the total production 
in time period $t$ by $\bar{x}_t$, i.e. $\bar{x}_t=\sum_{i\in \Nc}x_{i,t}$.

The problem consisting of the optimality conditions of the producer problems \eqref{eq:Producers} 
together with the market clearing condition \eqref{eq:MCfix} is equivalent to the system of necessary
and sufficient optimality conditions, in this case the \revv{Karush–Kuhn–Tucker (KKT)} conditions, of a linear program.
This problem \revv{is called} welfare optimization problem or central planner problem under fixed demand \revv{and} reads 
\begin{subequations}\label{eq:Wfix}
  \begin{align}
    \max_{x,y\geq 0}~&-\sum_{i\in \Nc}(\invc_iy_i
      +(\sum_{t\in \T}\varc_ix_{i,t}))\\
    \st~&x_{i,t}\leq y_i,\quad\quad i\in \Nc,t\in \T,\\
    &\sum_{i\in \Nc}x_{i,t}=d_t,\quad\quad t\in \T.\label{eq:WfixMarketClearing}
  \end{align}
\end{subequations}
This optimization problem minimizes the total cost in the market, i.e., it determines the system optimum that a centralized planning process would follow. 
We thus call it the central planner problem. It yields an approach to determine market clearing prizes as follows.
The optimal dual variables of \eqref{eq:WfixMarketClearing} are exactly the prizes $\pi_t$ that 
lead to a market clearing equilibrium. \rev{Further, we note that \eqref{eq:Wfix} always
has an optimal solution. Indeed, it is equivalent to a linear minimization problem that is bounded
from below by zero for which thus an optimum solution is attained.}

Second, in \Cref{sec:elastic-demand}, we consider the case of elastic demand instead of fixed demand. 
Here, an elastic demand function 
$p_t:\R\rightarrow \R$ is given in each time period. 
\rev{We assume $p_t$ to be strictly decreasing for every $t\in \T$. Note that this is one of the fundamental assumptions for most microeconomic frameworks, compare e.g. \cite{MasColell1995}.}
With this, in each time period $t\in \T$, the demand side solves the problem 
  \begin{align}\label{eq:Demandel}
      \max_{d_t\geq 0}\int_0^{d_t}p_t(s)ds - \pi_td_t
  \end{align}
The optimality conditions of the producers \eqref{eq:Producers} and the consumers \eqref{eq:Demandel}, together with the market clearing conditions \eqref{eq:MCfix} are equivalent to 
the KKT conditions of the welfare optimization problem under elastic demand 
\begin{subequations}\label{eq:Wel}
    \begin{align}
      \max_{x,y\geq 0}~&\sum_{t\in\T}\int_0^{\bar{x}_t}\rev{p_t(s)}ds
        -\sum_{i\in \Nc}(\invc_iy_i+(\sum_{t\in \T}\varc_ix_{i,t}))\\
      \st~&x_{i,t}\leq y_i,\quad\quad i\in \Nc,t\in \T.
    \end{align}
  \end{subequations}
Since the elastic demand function is strictly decreasing, the objective function in \eqref{eq:Wel} is concave. 
Thus, \rev{there exists an optimal solution and} the KKT conditions of \eqref{eq:Wel} are necessary and sufficient optimality conditions . 
This implies that optimal solution $(x^*,y^*)$ of \eqref{eq:Wel} can be extended to a market 
equilibrium by defining prices as $\pi_t=p_t(\sum_ix^*_{i,t})$ for all $t\in \T$.

In more detail, a set of prices, investments and production choices $(\pi,y,x)$ is a market equilibrium if and only if $(y,x)$ is an optimal solution of \eqref{eq:Wel}. 
The market clearing prices are then given by $\pi_t=\rev{p_t(\bar{x})}$ for all $t\in \T$. 
The vector $\bar{x}$, i.e. the total production in every time period,  is unique.
For more details, we refer to \cite{Crew1995} or Chapter 5 in \cite{schewe2019}.

\subsection*{Notation}
For the remainder of the paper, we define some notation.
We denote the unit vectors by $e_i$ and the vector of all ones by $e$ (dimension clear from context).
We use the notation $[n]=\{1,...,n\}$.
For a convex and compact set 
$\mathcal{C}\subset \R^n_{\geq 0}$ we define the parameter
\begin{subequations}\label{eq:maxtau}
  \begin{align}
    \tau(\C)~:=\quad\max_{x\in\R^n,\tau\in\R} ~&\tau\\
    \st~~~~&\tau\leq x_i,\quad\quad i\in[n],\label{eq:maxtau-c1}\\
    &x\in\C.\label{eq:maxtau-c2}
  \end{align}
\end{subequations}
\rev{Thus, $\tau(\C)$ is the largest number such that there is an $x\in \C$ with minimal entry equal to $\tau(\C)$.}


\section{Robust Linear Optimization}
\label{sec:robust-lin-opt}
Our results on the different robust market models are based on auxiliary results from robust 
linear optimization. 
To the best of our knowledge, these results are new. \revv{The results of this section are also used} to
introduce \revv{the} terminology for robust problems.

Let $A\in\R^{m\times n}$, $B\in\R^{m\times k}$, $b\in\R^m$ and let 
$\X=\{(x,y)\in\R^{n+k}_{\geq 0}\defsep Ax+By\geq b\}$. 
For cost vectors $c\in\R^n_{\geq 0}$ and 
$d\in\R^k_{\geq 0}$ we consider the linear program
\begin{align}\label{eq:NomLP}
  \min_{(x,y)\in\X} c^T x+d^T y.
\end{align}
Let us assume that the cost vector $c$ is linearly affected by uncertainty that is modeled as
$c(u)=c+\Lambda u$ for some diagonal matrix 
$\Lambda=\text{diag}(\lambda)\in\rev{\R^{n\times n}_{\geq 0}}$. 
We assume that the uncertainty \rev{parameter} $u$ lies in an arbitrary polyhedral uncertainty set of the form
\begin{align*}
  \Unc=\{u\in\R^n_{\geq 0}\defsep Pu\leq r\}\subseteq [0,1]^n,
\end{align*}
with a suitably defined matrix $P$ and \rev{vector $r$} of appropriate dimension.
Additionally, we assume that the \rev{projection of $\Unc$ onto any axis $\R e_i$ is $[0,1]$.}

In robust optimization, one seeks to find a solution that attains the minimal guaranteed objective value
among all possible realizations of the uncertainty. Such a so called robust optimum solution solves
the robust counterpart of \eqref{eq:NomLP} which reads
\begin{align}\label{eq:Rob-MMP}
  \robv:=\min_{(x,y)\in\X}~&c^T x+d^T y+\max_{u\in\Unc}(\Lambda u)^T x.
\end{align}

Instead of computing an optimal robust solution by solving \eqref{eq:Rob-MMP}, 
one could also approximate the robust counterpart by replacing the uncertainty set with the 
box $\rev{\mathcal{B}=\Lambda[0,1]^n}$. 
We obtain the problem
\begin{align}\label{eq:Box-LP}
  \boxtv:=\min_{(x,y)\in\X}~&(c+\lambda)^T x+d^T y.
\end{align}
For an optimal solution of \eqref{eq:Box-LP}, $(x^*,y^*)$, we define the objective value for the worst-case realization of the uncertainty
\begin{align*}
  \boxv:=c^T x^*+d^T y^*+\max_{u\in\Unc}(\Lambda u)^T x^*.
\end{align*}
We note that $\boxv$ depends on the choice of $x^*$. 
However, we always have $\rev{\robv\leq}~\boxv\leq \boxtv$.

We are interested in the quality of this approximation, i.e. the gap between $\boxv$ and $\robv$. 
In the following theorem we obtain a bound on this gap which will be used in the following section to compare robust market equilibria with central planner solutions.
\begin{theorem}\label{Thm:MainThm}
  The inequality 
  \begin{align*}
    \boxv\leq\frac{1}{\tauU}\robv
  \end{align*}
  holds.
\end{theorem}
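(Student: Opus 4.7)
The plan is to chain three inequalities: a lower bound on $\robv$ coming from the guarantee that $\tauU$ provides on every coordinate of some $u\in\Unc$, a trivial upper bound $\boxv\leq\boxtv$, and the observation $\tauU\leq 1$ which follows from $\Unc\subseteq[0,1]^n$. All ingredients are nonnegative ($c,d,\lambda\geq 0$ and the feasible variables are in the nonnegative orthant), and this nonnegativity is what makes the chain go through.

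First I would let $(\hat x,\hat y)\in\X$ attain the optimum in \eqref{eq:Rob-MMP}. By definition of $\tauU$, there is $\bar u\in\Unc$ whose every entry satisfies $\bar u_i\geq \tauU$. Since $\Lambda=\diag(\lambda)$ with $\lambda\geq 0$ and $\hat x\geq 0$, we get
\begin{align*}
\max_{u\in\Unc}(\Lambda u)^T\hat x \;\geq\; (\Lambda\bar u)^T\hat x \;=\; \sum_{i=1}^n \lambda_i\bar u_i\hat x_i \;\geq\; \tauU\,\lambda^T\hat x.
\end{align*}
Plugging this into the definition of $\robv$ yields
\begin{align*}
\robv \;=\; c^T\hat x + d^T\hat y + \max_{u\in\Unc}(\Lambda u)^T\hat x \;\geq\; c^T\hat x + d^T\hat y + \tauU\,\lambda^T\hat x.
\end{align*}

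Next, since $\Unc\subseteq[0,1]^n$, the constraints \eqref{eq:maxtau-c1}--\eqref{eq:maxtau-c2} force $\tauU\leq 1$. Combined with $c^T\hat x+d^T\hat y\geq 0$ (because $c,d,\hat x,\hat y\geq 0$), we obtain
\begin{align*}
c^T\hat x+d^T\hat y+\tauU\,\lambda^T\hat x \;\geq\; \tauU\bigl((c+\lambda)^T\hat x+d^T\hat y\bigr) \;\geq\; \tauU\,\boxtv,
\end{align*}
where the last inequality uses that $(\hat x,\hat y)\in\X$ is feasible for \eqref{eq:Box-LP}. Finally, the chain $\robv\leq\boxv\leq\boxtv$ noted right after \eqref{eq:Box-LP} (and more directly the estimate $\max_{u\in\Unc}(\Lambda u)^T x^*\leq \lambda^T x^*$) gives $\boxv\leq\boxtv$, so $\robv\geq \tauU\,\boxv$. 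Dividing by $\tauU$ (which is positive whenever the theorem is nontrivial; otherwise the claim is vacuous) yields the desired bound.

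There is no deep obstacle here: the proof is essentially a chain of elementary estimates. The only step that deserves care is the transition from $c^T\hat x+d^T\hat y+\tauU\lambda^T\hat x$ to $\tauU((c+\lambda)^T\hat x+d^T\hat y)$, where one must remember to use both $\tauU\leq 1$ and the nonnegativity of all quantities involved; without nonnegativity of $c$, $d$, $\hat x$, $\hat y$ the scaling argument would fail.
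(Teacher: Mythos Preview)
Your argument is correct and, in fact, more elementary than the paper's. Both proofs establish the same intermediate inequality $\robv\geq\tauU\,\boxtv$ and then use $\boxv\leq\boxtv$, but the paper arrives there via LP duality: it dualizes the inner maximization in \eqref{eq:Rob-MMP}, takes the dual of the resulting LP and of \eqref{eq:Box-LP}, and then shows that scaling an optimal dual solution $\theta^*$ of the box problem by $\tauU$ (paired with a maximizer $u^*$ of \eqref{eq:maxtau}) yields a feasible point for the dual of the robust problem. Your route stays entirely primal: you bound the inner adversary from below by evaluating at the single point $\bar u$ with all coordinates $\geq\tauU$, then exploit nonnegativity of $c,d,\hat x,\hat y$ together with $\tauU\leq 1$ to factor $\tauU$ out of the whole objective. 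This avoids duality altogether and, as a bonus, works verbatim for any convex compact $\Unc$ satisfying the projection assumption, a fact the paper recovers only afterwards via polyhedral approximation (Remark~\ref{Remark:MainThm}). The one place where you are slightly looser than the paper is the positivity of $\tauU$: the paper argues $\tauU\geq\tfrac{1}{n}>0$ from convexity and the projection hypothesis, whereas you defer to ``whenever the theorem is nontrivial''; it would be cleaner to record that $\tauU>0$ always holds under the standing assumptions.
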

\begin{proof}
  First, we note that \rev{$0< \tauU\leq 1$, where the strict inequality is due to 
  convexity of the uncertainty set $\Unc\subseteq[0,1]^n$ and the assumption that the
  projectios of $\Unc$ onto any axis $\R e_i$ is $[0,1]$.}
	Dualizing the inner maximization problem that determines the 
	worst-case uncertainty realization in \eqref{eq:Rob-MMP}, yields the linear program
	\begin{subequations}\label{eq:Rob-LP}
	  \begin{align}
	    \robv=\min_{x,y,z}~&c^T x+d^T y+r^T z\\
	    \st~&Ax+By\geq b,\\
	    &P^T r\geq \Lambda x,\\
	    &x,y,z\geq 0.
	  \end{align}
	\end{subequations}
	Its dual problem reads 
	\begin{subequations}\label{eq:Rob-DLP}
	  \begin{align}
	    \robv=\max_{\theta,u\rev{\geq 0}}~&b^T \theta\\
	    \st~&A^T \theta\leq c+\Lambda u,\\
	    &B^T \theta \leq d,\\
	    &\rev{Pu\leq r}.
	  \end{align}
	\end{subequations}
  On the other hand, the dual problem of \eqref{eq:Box-LP} is given by
  \begin{subequations}\label{eq:Box-DLP}
    \begin{align}
      \boxtv=\max_{\theta\geq 0}~&b^T \theta\\
      \st~&A^T \theta\leq c+\lambda,\\
      &B^T \theta \leq d.
    \end{align}
  \end{subequations}
  Let $\theta^*$ be an optimal solution of \eqref{eq:Box-DLP} and let 
  $(\rev{u^*},\tauU)$ be an optimal solution of \eqref{eq:maxtau} for $\mathcal{C}=\Unc$.
  Then,
  \begin{align*}
    A^T(\tauU \theta^*)
    \leq \tauU (c+\lambda)
    \leq c+\tauU\lambda\leq c+\Lambda u^*
  \end{align*}
  and
  \begin{align*}
    B^T(\tauU \theta^*)
    \leq \tauU d
    \leq d.
  \end{align*}
  Thus, $(\tauU\theta^*,u^*)$ is feasible for \eqref{eq:Rob-DLP}. Hence,
  \begin{align*}
     \robv\geq b^T (\tauU\theta^*)
     =\tauU b^T \theta^*
     =\tauU\boxtv
     \geq \tauU\boxv.
  \end{align*}
\end{proof}
\begin{remark}\label{Remark:MainThm}
Since any convex body that lies in the $[0,1]^n$ box \revv{can be approximated} with a 
polytope arbitrarily well, this inequality also holds true for closed and convex 
but non-polyhedral uncertainty sets.
In detail, let $(u^*,\tauU)$ be an optimal solution of \eqref{eq:maxtau} for $\mathcal{C}=\Unc$.
Let $\Unc^{\text{in}}=\text{conv}\{0,u^*\}\subset \Unc$ and let $\Unc^{\text{out}}\subset [0,1]^n$ 
be a polyhedral outer approximation of $\Unc$ with $u^*\in \Unc^{\text{out}}$. 
By replacing $\Unc$ with $\Unc^{\text{in}}$ in the computation of $\robv$ and with $\Unc^{\text{out}}$
in the computation of $\boxv$, one can conduct the above proof in a similar fashion.
\end{remark}

In addition to comparing market equilibria with central planner solutions, we will also compare
different robust optimization techniques applied to the central planner.
A, in general, less conservative model for robust optimization is the adjustable robustness where some of the variables are so called \textit{wait-and-see} variables that may 
be chosen after the uncertainty is revealed. 
In our case, we assume that $x$ may be chosen after the uncertainty $u$ is revealed. 
The adjustable robust counterpart of the uncertain problem then can be stated as
\begin{subequations}\label{eq:ARob-MMP}
\begin{align}
  \ARv := \min_{y\geq 0}~\max_{u\in\Unc}~\min_{x\geq 0}~&(c+\Lambda u)^T x
    +d^T y\\
  \st~&Ax+By\geq b
\end{align}
\end{subequations}
\rev{In \eqref{eq:Rob-MMP} and \eqref{eq:ARob-MMP}, only the objective functions are 
uncertain.}
Thus, Theorem 14.2.4 in \cite{Ben-Tal_et_al:2009} directly yields the following
\begin{theorem}\label{Thm:TwoStageRob}
  Let $\X$ be a \rev{convex polytope}. The equality
  \begin{align*}
    \robv=\ARv
  \end{align*}
  holds.
\end{theorem}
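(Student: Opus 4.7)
The plan is to establish both inequalities $\ARv\leq\robv$ and $\ARv\geq\robv$ separately. The first direction is immediate from weak duality applied pointwise in $y$: for any feasible first-stage decision,
\begin{align*}
\max_{u\in\Unc}\min_{x\geq 0,\,Ax+By\geq b}(c+\Lambda u)^T x\;\leq\;\min_{x\geq 0,\,Ax+By\geq b}\max_{u\in\Unc}(c+\Lambda u)^T x,
\end{align*}
so after adding $d^T y$ and taking $\min_y$ on both sides one obtains $\ARv\leq\robv$. Equivalently, any static optimizer $(x^*,y^*)$ of \eqref{eq:Rob-MMP} furnishes a constant adjustable policy $x(u)\equiv x^*$ of the same worst-case value.

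For the reverse inequality, I would fix $y$ in the projection $Y:=\defset{y\geq 0}{\exists\,x\text{ with }(x,y)\in\X}$ and apply a minimax theorem to the saddle function $f(x,u):=(c+\Lambda u)^T x$ on the product $S_y\times\Unc$, where $S_y:=\defset{x\geq 0}{Ax+By\geq b}$. The function $f$ is bilinear, hence continuous, linear (in particular convex) in $x$ for each fixed $u$, and linear (in particular concave) in $u$ for each fixed $x$. The uncertainty set $\Unc$ is compact and convex by assumption, and the slice $S_y$ is a nonempty closed convex polyhedron that is bounded as the $y$-section of the polytope $\X$. Under these hypotheses, Sion's minimax theorem delivers the swap
\begin{align*}
\max_{u\in\Unc}\min_{x\in S_y}f(x,u)=\min_{x\in S_y}\max_{u\in\Unc}f(x,u).
\end{align*}
Adding $d^T y$ and minimizing over $y\in Y$ yields $\ARv=\robv$.

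The main technical obstacle is the minimax swap, which would fail without compactness of the $x$-slice: the inner minimum could be $-\infty$ on a half-line of recession directions, and the two orders of the extrema would genuinely differ. This is precisely where the hypothesis that $\X$ is a \emph{bounded} polytope is used. As an alternative that bypasses Sion's theorem, one can invoke Theorem~14.2.4 of \cite{Ben-Tal_et_al:2009} directly: introduce the epigraphic variable $t$ and the single uncertain constraint $t\geq(c+\Lambda u)^T x+d^T y$ while leaving every other constraint certain. The uncertainty is then trivially constraint-wise, so the cited result asserts that adjustability is redundant, giving $\robv=\ARv$.
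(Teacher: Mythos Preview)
Your proposal is correct. The paper itself does not give a self-contained argument for this theorem; it simply invokes Theorem~14.2.4 of \cite{Ben-Tal_et_al:2009}, which is exactly the alternative you sketch in your final paragraph. Your primary route via Sion's minimax theorem is a genuinely different and more self-contained argument: by fixing $y$ and exploiting compactness of the slice $S_y$ (guaranteed by the polytope hypothesis on $\X$) together with bilinearity of $(x,u)\mapsto(c+\Lambda u)^T x$, you obtain the minimax swap directly without appealing to the constraint-wise uncertainty machinery of \cite{Ben-Tal_et_al:2009}. This buys transparency about precisely where boundedness of $\X$ is consumed---a point the paper leaves implicit in the citation---at the modest cost of importing Sion's theorem. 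Either approach is adequate here; your presentation in fact subsumes the paper's.
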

Therefore, in our case adjustable robustness does not reduce the price of robustness of the robust solution under the mild assumption that $\X$ is compact. 
In words, this result says that it makes no difference whether 
first one has to choose $x$ and $y$ anticipating the worst case realization of the uncertainty, or whether one 
is allowed to adapt the decisions $x$ after the uncertainty manifests itself. In particular, the second approach does not improve the robust solution values.
We slightly extend this result so that we will be able to apply it in \Cref{sec:elastic-demand}. 
To this end, we let $g:\R^n\to \R$ be a convex \rev{and deterministic} function, which we add to the objective 
function of the uncertain problem \eqref{eq:NomLP}.
The robust counterpart then reads
\begin{align*}
  \crobv:=\min_{(x,y)\geq 0}~&g(x)+c^T x+d^T y
      +\max_{u\in\Unc}(\Lambda u)^T x\\
      \st~&Ax+By\geq b
\end{align*}
and the adjustable robust counterpart is given by
\begin{align*}
  \cARv := \min_{y\geq 0}~\max_{u\in\Unc}~\min_{x\geq 0}~&g(x)
    +(c+\Lambda u)^T x+d^T y\\
  \st~&Ax+By\geq b
\end{align*}
For this setting we now obtain an analogous result to \Cref{Thm:TwoStageRob}.
\begin{theorem}\label{Thm:TwoStageRobConv}
  Let $\X$ be \rev{a convex polytope}. The equality 
  \begin{align*}
    \crobv=\cARv
  \end{align*}
  holds.
\end{theorem}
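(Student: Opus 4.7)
My plan is to prove the equality by showing $\cARv \leq \crobv$ (which is trivial) and $\cARv \geq \crobv$ via a minimax interchange. The key observation is that adding the deterministic convex function $g(x)$ does not destroy the convex–concave structure in $(x,u)$ that underlies \Cref{Thm:TwoStageRob}: the feasible set is still independent of $u$, and $g$ only appears in the objective, so the same strategy should apply with Sion's minimax theorem in place of (or as a generalization of) the linear-programming duality used for \Cref{Thm:TwoStageRob}.

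First I would observe that any $(x,y)$ feasible for the strict robust counterpart induces the constant adjustable policy $x(u)\equiv x$, so $\cARv \leq \crobv$ is immediate. For the reverse direction, fix any $y\geq 0$ for which the slice $\X_y := \{x\geq 0 \colon Ax \geq b - By\}$ is nonempty; since $\X$ is a convex polytope, $\X_y$ is a compact convex set. Define
\[
  F(x,u) := g(x) + (c+\Lambda u)^T x.
\]
This function is continuous and convex in $x\in \X_y$ (sum of the convex $g$ and a linear term) and continuous and concave (in fact linear) in $u\in \Unc$, on a product of compact convex sets. Sion's minimax theorem then yields
\[
  \max_{u\in\Unc}\, \min_{x\in\X_y}\, F(x,u)
    = \min_{x\in\X_y}\, \max_{u\in\Unc}\, F(x,u).
\]
Adding the deterministic term $d^T y$ and taking the outer minimum over $y\geq 0$ turns the left-hand side into $\cARv$ and the right-hand side into $\crobv$, giving the desired equality.

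The main technical obstacle is verifying the hypotheses of Sion's theorem, in particular the uniform compactness of the slices $\X_y$, which is where the assumption that $\X$ is a polytope is used. One also has to handle the case in which $\X_y$ is empty for some $y$, but such $y$ contribute $+\infty$ to both the adjustable and the strict robust value and so are discarded by the outer minimization. I considered an alternative route via the epigraph reformulation $s\geq g(x)$ followed by a direct appeal to \Cref{Thm:TwoStageRob}, but this fails because introducing $\{(x,s)\colon s\geq g(x)\}$ destroys polyhedrality of the feasible set unless $g$ is piecewise linear; hence Sion's theorem seems the cleanest tool here.
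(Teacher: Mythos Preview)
Your argument is correct. The slices $\X_y$ are indeed compact convex polytopes whenever they are nonempty (they inherit boundedness from the polytope $\X$), $\Unc$ is compact convex, and $F(x,u)=g(x)+(c+\Lambda u)^Tx$ is continuous, convex in $x$ (as $g$ is finite-valued convex on $\R^n$, hence continuous) and affine in $u$; so Sion's theorem applies and the rest follows by pulling the deterministic term $d^Ty$ out and minimizing over $y$. Your handling of empty slices is also correct.

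Your route, however, differs from the one taken in the paper. The paper does not invoke Sion's theorem but instead adapts the proof of Theorem~14.2.4 in \cite{Ben-Tal_et_al:2009}: it first reduces (w.l.o.g.) to the case where all variables are adjustable, then argues by contradiction. Assuming $\crobv>\bar t>\cARv$, for every $x\in\X$ one finds $u_x\in\Unc$ with $c(u_x)^Tx+g(x)>\bar t$; compactness of $\X$ yields a finite cover and hence finitely many convex functions $f_j(x)=c(u_j)^Tx+g(x)-\bar t$ whose pointwise maximum is uniformly positive on $\X$. A convex-analysis alternative (Proposition~2.18 in \cite{tuy1998convex}) then produces convex multipliers $\lambda_j$ with $\sum_j\lambda_j f_j>0$ on $\X$; setting $\hat u=\sum_j\lambda_j u_j\in\Unc$ and using affinity of $c(\cdot)$ contradicts $\bar t>\cARv$. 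In effect the paper re-derives, by hand, the special case of minimax duality you obtain in one line from Sion. Your approach is shorter and more transparent; the paper's approach is more self-contained in that it avoids appealing to an external minimax theorem and makes explicit exactly where convexity of $g$, affinity of $c(u)$, convexity of $\Unc$, and compactness of $\X$ each enter.
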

\begin{proof}
  We adapt the proof of Theorem 14.2.4 in \cite{Ben-Tal_et_al:2009} to our setting with nonlinear 
  objective function. It is clear, that $\crobv\geq \cARv$ holds. 
  
  It remains to show that $\crobv\leq \cARv$ holds as well. 
  Without loss of generality we may assume that all variables are adjustable in the adjustable 
  robust solution, i.e. \rev{the dimension $k$ of $y$ is zero}.
  With this, we have $\X=\{x\in\R^n_{\geq 0}\defsep Ax\geq b\}$ and 
  \begin{align*}
    \crobv=&\min\{t\defsep \exists x\in\X:\forall u\in\Unc: c(u)^Tx+g(x)-t\leq 0\},\\
    \cARv=&\min\{t\defsep \forall u\in \Unc:\exists x\in X:c(u)^Tx+g(x)-t\leq 0\}.
  \end{align*}
  We assume for contradiction that $\crobv>\cARv$. 
  
  There exists $\bar{t}\in \R$ such that $\crobv>\bar{t}> \cARv$. Then, for every $x\in\X$ there exists an uncertainty realization $u_x\in\Unc$ such that $c(u_x)^Tx+g(x)-\bar{t}>0$.
  Hence, for every $x\in\X$ there exists an $\varepsilon_x>0$ and a neighbourhood $U_x\subset \R^n$ such
  that 
  \begin{align}\label{convexproofeq1}
    \forall z\in U_x: c(u_x)^Tz+g(z)-\bar{t}\geq \varepsilon_x.
  \end{align}
  Since $\X$ is assumed to be compact, we know that there exist finitely many points 
  $x^1,...,x^N\in\X$ such that $\X\subseteq \bigcup_{j=1}^N U_{x^j}$.
  We set $\varepsilon=\min_{j\in[N]}\varepsilon_{x^j}$, $u_j=u_{x^j}$ and define the functions
  \begin{align*}
      f_j(x):=c(u_j)^Tx+g(x)-\bar{t}.
  \end{align*}
  \rev{Since $g$ is convex, $f_j$, $j\in [N]$, are convex as well} and from 
  \eqref{convexproofeq1} we obtain that
  \begin{align*}
    \max_{j\in[N]}f_j(x)\geq \varepsilon >0\quad\fa x\in \X.
  \end{align*}
  From Proposition 2.18 in \cite{tuy1998convex} it follows
  that there exist $\lambda_j\geq 0$, $\sum_{j\in[N]} \lambda_j =1$ such that 
  \begin{align}\label{convexproofeq2}
    f(x):=\sum_{j\in[N]} \lambda_j f_j(x)\geq \varepsilon >0 \quad\fa x\in \X.
  \end{align}
  On the other hand, let $\hat{u}:=\sum_{j\in[N]}\lambda_ju_j\in\Unc$.
  Since $\bar{t}>\cARv$, there exists an $\hat{x}\in\X$ such that 
  \begin{align*}
    c(\hat{u})^T\hat{x}+g(\hat{x})-\bar{t}\leq 0.
  \end{align*}
  However, since $c(u)$ is affine in $u$, we have 
  $c(\hat{u})^T\hat{x}+g(\hat{x})-\bar{t}=\sum_{j\in[N]}\lambda_jf_j(\bar{x})$
  and thus $f(\hat{x})=\sum_{j\in[N]}\lambda_jf_j(\hat{x})\leq 0$, contradicting \eqref{convexproofeq2}.

\end{proof}

In this section, we gathered some theoretical results on robust optimization. 
These results will be used in order to prove statements on different robust market models in the remainder 
of this paper.


\section{Robust Peak Load Pricing with Fixed Demand}\label{sec:fixed-demand}

In the peak load pricing model, different input parameters may be uncertain, for example
uncertainties in the demand. 
In the following, however, we consider the variable costs of the producers 
to be uncertain. 

As a standard assumption in robust optimization, we \rev{consider the uncertainty set $\Unc\subseteq [0,1]^{N\times T}$ to be a convex polytope}.
\rev{The uncertain variable costs are then defined by $\varc_{i,t}(u)=\varc_i+a_iu_{i,t}$ with nominal values $\varc_i$, the uncertainty parameter $u\in\Unc$, and some 
scaling factors $a_i\geq 0$ for all $i\in\Nc$. }
In order to ease the exposition, we assume that the nominal values \rev{$\varc_i$} and the worst case values \rev{$\varc_i+a_i$} are feasible realizations of the uncertainty set. 
Therefore, without loss of generality, we require $0\in\Unc$ and that the projections of $\Unc$ on \rev{any coordinate axis} is $[0,1]$, 
i.e. for every $i\in \Nc$, $t\in\T$ there exists a $u\in\Unc$ such that $u_{i,t}=1$.
Furthermore, we assume that $\Unc$ is independent of the time period such that it can be written as a \rev{Cartesian} product of the form $\Unc=\rev{\prod_{t\in\T}}\Unc'$ for 
some $\Unc'\subseteq [0,1]^{N}$.

We start by discussing the strict robustness and the extension to adjustable robustness for fixed demand in this section.


\subsection{Single Stage}

In the classical strict robust approach, all variables are here-and-now decisions 
that have to be made before the uncertainty realizes.
Hence, in the corresponding robust market problem, all producers produce according to their respective worst case variable costs. 
Thus, a robust producer solves the problem
\begin{subequations}\label{eq:RobustEq}
  \begin{align}
  \max_{x_i,y_i\geq 0}~&\sum_{t\in \T}((\pi_t-(\varc_i+a_i))x_{i,t})
    -\invc_iy_i\\
  \st~&x_{i,t}\leq y_i,\quad\quad t\in \T.
  \end{align}
\end{subequations}
Such a market with strict robust producers is equivalent to a nominal market with changed variable costs $\hat{c}^{\text{var}}_{i,t}:=\varc_{i,t}+a_i$ for all $i\in\Nc$ and $t\in\T$. 
As described in \Cref{sec:problem-statement}, we compute the equilibrium of this market 
problem by solving the equivalent optimization reformulation
\begin{subequations}\label{eq:RobustEqCP}
  \begin{align}
    \min_{x,y\geq 0}~&\sum_{i\in \Nc}(\invc_iy_i
      +(\sum_{t\in \T}(\varc_i+a_{i,t})x_{i,t}))\\
    \st~&x_{i,t}\leq y_i,\quad\quad i\in \Nc,t\in \T,\\
    &\sum_{i\in \Nc}x_{i,t}=d_t,\quad\quad t\in \T,\label{eq:RobustEqCP-MC}
  \end{align}
\end{subequations}
where the optimal dual variables of \eqref{eq:RobustEqCP-MC} are exactly the 
market clearing prices for \eqref{eq:RobustEq}. 
\rev{Analogously to the nominal case, \eqref{eq:RobustEqCP} has an optimal solution.
Let $(x^*,y^*)$ be such} an optimal solution of \eqref{eq:RobustEqCP}, and therefore the 
capacity and production choices in an equilibrium of the 
market with robust producers. We define the worst case total cost of this equilibrium by 
\begin{align*}
  E_R:=\max_{u\in\Unc} \sum_{i\in \Nc}(\invc_iy^*_i
    +(\sum_{t\in \T}(\varc_i+a_iu_{i,t})x^*_{i,t})).
\end{align*}

In the uncertain market, each player independently deals with his or her worst case scenario. 
However, it is also of interest to compare the result with that of the robust central planner problem. The solutions may differ when taking into account possible correlation between the uncertainties. The robust central planner or robust total cost minimization problem reads
\begin{subequations}\label{eq:RobustW}
  \begin{align}
    \RobWel:=\min_{x,y\geq 0}~&\max_{u\in\Unc}~\sum_{i\in \Nc}(\invc_iy_i
      +(\sum_{t\in \T}(\varc_i+a_iu_{i,t})x_{i,t}))\\
    \st~&~x_{i,t}\leq y_i,\quad\quad i\in \Nc,t\in \T,\\
    &\sum_{i\in \Nc}x_{i,t}=d_t,\quad\quad t\in \T.
  \end{align}
\end{subequations}

We now discuss the differences between the market with robust producers and the 
robust central planner problem.
In the special case \rev{that the all-ones vector $e$ is contained in $\Unc$, for example
if the uncertainty set is a box,} all prices can attain their worst case value at the same time, and the robust solution needs to protect in particular against this realization of uncertainties.
Thus, in this case \eqref{eq:RobustW} is equivalent to \eqref{eq:RobustEqCP} and we obtain the following
equality.
\begin{theorem}
  If $e\in\Unc$, then $E_R=\RobWel$.
\end{theorem}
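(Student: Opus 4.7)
The plan is to observe that when $e \in \Unc$, the uncertainty set contains its componentwise maximum, so both quantities $E_R$ and $\RobWel$ collapse to the optimal value of the same deterministic linear program, namely \eqref{eq:RobustEqCP}.

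First, I would analyze the inner maximization common to both expressions. For any fixed nonnegative $x$ and $y$, the objective of \eqref{eq:RobustW} (and the expression defining $E_R$) is linear in $u$ with coefficients $a_i x_{i,t} \geq 0$. Since $\Unc \subseteq [0,1]^{N\times T}$ and $e \in \Unc$, the vector $u = e$ is componentwise maximal in $\Unc$, and thus attains
\[
\max_{u\in\Unc}\sum_{i\in\Nc}\sum_{t\in\T} a_i u_{i,t} x_{i,t} = \sum_{i\in\Nc}\sum_{t\in\T} a_i x_{i,t}.
\]

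Next, I would substitute this into \eqref{eq:RobustW}. The inner maximum becomes independent of the choice of $(x,y)$ in the sense that it is always attained at $u=e$, so
\[
\RobWel = \min_{x,y\geq 0}\ \sum_{i\in\Nc}\Bigl(\invc_i y_i + \sum_{t\in\T}(\varc_i+a_i) x_{i,t}\Bigr)
\]
subject to the capacity and market clearing constraints, which is exactly \eqref{eq:RobustEqCP}.

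Finally, I would evaluate $E_R$ at the optimal solution $(x^*,y^*)$ of \eqref{eq:RobustEqCP}. By the same argument the maximum over $u\in\Unc$ is attained at $u=e$, so
\[
E_R = \sum_{i\in\Nc}\Bigl(\invc_i y_i^* + \sum_{t\in\T}(\varc_i+a_i) x_{i,t}^*\Bigr),
\]
which is the optimal value of \eqref{eq:RobustEqCP}, and hence equals $\RobWel$. There is no substantive obstacle here; the only subtlety worth stating explicitly is the componentwise-monotonicity argument that uses $a_i\geq 0$, $x_{i,t}\geq 0$, and the upper bound $\Unc \subseteq [0,1]^{N\times T}$ to justify that $u=e$ is indeed a worst-case realization whenever it is feasible.
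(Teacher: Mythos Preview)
Your argument is correct and matches the paper's reasoning: the paper does not give a formal proof but simply observes that when $e\in\Unc$ the worst case is always attained at $u=e$, so \eqref{eq:RobustW} reduces to \eqref{eq:RobustEqCP} and hence coincides with $E_R$. Your write-up makes this precise by explicitly invoking the nonnegativity of $a_i$ and $x_{i,t}$ together with $\Unc\subseteq[0,1]^{N\times T}$, which is exactly the monotonicity step needed.
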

However, the uncorrelated situation seems quite unnatural in practical situations. 
Thus, typically one has the situation in which the objective value $\RobWel$ of the
robust central planner problem \eqref{eq:RobustW} is smaller than the worst case total 
cost in the market with robust producers $E_R$. It is of interest to quantify this difference. 
It will turn out in the following that the difference only depends on the size and structure of the 
chosen uncertainty set~$\Unc$.

\begin{theorem}\label{Thm:fixed:main-bound}
  The inequalities 
  \begin{align*}
    \RobWel\leq E_R\leq \frac{1}{\tauU}\RobWel
  \end{align*}
  hold. 
  In particular, this bound is sharp in the sense that for any $\delta>0$ there exist instances such that
  \begin{align*}
    E_R \geq  \frac{1-\delta}{\tauU} \RobWel.
  \end{align*}
\end{theorem}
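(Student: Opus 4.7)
The plan is to split the statement into the two inequalities and then exhibit a saturating instance.

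For the lower bound $\RobWel \leq E_R$, I observe that the optimal pair $(x^*,y^*)$ of \eqref{eq:RobustEqCP} is feasible for \eqref{eq:RobustW}, since the two problems share exactly the same constraint set. Substituting $(x^*,y^*)$ into the outer minimization of \eqref{eq:RobustW} turns the inner supremum into precisely $E_R$ by the definition of the latter, giving $\RobWel \leq E_R$.

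For the upper bound $E_R \leq \frac{1}{\tauU}\RobWel$ I reduce directly to \Cref{Thm:MainThm}. The trick is to flatten the double index $(i,t)$ into a single index, identifying in the notation of \Cref{sec:robust-lin-opt} the production vector $x \in \R^{NT}_{\geq 0}$ with entries $x_{i,t}$ and the capacity vector $y \in \R^{N}_{\geq 0}$. Setting $c_{(i,t)} := \varc_i$, $d_i := \invc_i$ and $\lambda_{(i,t)} := a_i$ turns \eqref{eq:RobustW} into an instance of \eqref{eq:Rob-MMP}, whence $\robv = \RobWel$. The feasible set $\X = \{(x,y) \geq 0 : x_{i,t} \leq y_i,\ \sum_i x_{i,t} = d_t\}$ is a convex polytope after rewriting the equalities as two inequalities to match the form $Ax + By \geq b$. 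Under the same identification, \eqref{eq:RobustEqCP} coincides with the box approximation \eqref{eq:Box-LP}, so $(x^*,y^*)$ realizes $E_R = \boxv$. \Cref{Thm:MainThm} then delivers $E_R \leq \frac{1}{\tauU}\RobWel$ at once.

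For sharpness I construct an explicit family that meets the bound with equality, which a fortiori achieves $\frac{1-\delta}{\tauU}$ for every $\delta > 0$. A convenient choice is $T = 1$, $N$ identical producers with $\invc_i = \varc_i = 0$ and $a_i = 1$, demand $d_1 = 1$, and the simplex uncertainty set $\Unc = \mathrm{conv}\{0, e_1, \ldots, e_N\}$. Writing $u \in \Unc$ as a convex combination of the vertices shows that $\tauU = 1/N$, attained at $u = (1/N)e$. In the robust equilibrium all producers become interchangeable with unit worst-case cost, so one may select the concentrated optimum $x^*_{1,1} = 1$, $x^*_{i,1} = 0$ for $i \geq 2$ of \eqref{eq:RobustEqCP}, giving $E_R = 1$. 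For the robust central planner, the uniform allocation $x_{i,1} = 1/N$ yields worst-case cost $\max_{u \in \Unc} \sum_i u_i / N = 1/N$, and a simple symmetry/convexity argument shows this is optimal, so $\RobWel = 1/N$. Consequently $E_R = N \cdot \RobWel = \frac{1}{\tauU}\RobWel$.

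The main obstacle is essentially notational: one has to reinterpret the market data as the abstract uncertain LP of \Cref{sec:robust-lin-opt} and check that the equality constraints and the product structure of $\Unc$ do not interfere with the hypotheses of \Cref{Thm:MainThm}; the analytic content is entirely absorbed there. A small subtlety in the sharpness example is that the robust equilibrium is not unique in the symmetric setup, so one must explicitly point out a specific equilibrium attaining the bound, as done above.
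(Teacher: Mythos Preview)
Your arguments for the two inequalities are correct and match the paper exactly: the lower bound is immediate from feasibility, and the upper bound is a direct application of \Cref{Thm:MainThm} after the obvious reindexing.

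The sharpness part, however, misses the point of what is being claimed. In the paper's setting the uncertainty set $\Unc$ is \emph{fixed} throughout \Cref{sec:fixed-demand}; an ``instance'' refers to a choice of market data $(\invc,\varc,a,d)$. The sharpness statement therefore asserts that \emph{for every admissible $\Unc$} and every $\delta>0$ one can find market data with $E_R \geq \frac{1-\delta}{\tauU}\RobWel$. Your construction fixes $\Unc$ to be the standard simplex, so it only shows that the bound is attained for that one particular set; it says nothing about whether $\frac{1}{\tauU}$ is tight when $\Unc$ is, say, an arbitrary polytope with $\tauU=0.73$. The paper's proof handles a general $\Unc=\{u\geq 0: Pu\leq r\}$: it takes $T=1$, $d=1$, $\invc_i=0$, $\varc_1(u)=(1-\delta)u_1$, $\varc_i(u)=u_i$ for $i\geq 2$, and then uses an optimal basic solution of the LP defining $\tauU$ together with its active constraints to build a feasible point of the dualized central planner problem with objective value $\tauU$, yielding $\RobWel\leq\tauU$ while $E_R=1-\delta$.

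There is also a secondary issue you flagged yourself but did not resolve: in your fully symmetric instance every allocation with $\sum_i x_i=1$ solves \eqref{eq:RobustEqCP}, and for the uniform one $E_R=\RobWel$. So your ratio depends entirely on which optimal solution is selected. The paper's $(1-\delta)$ perturbation of producer~1 is not cosmetic: it makes the equilibrium unique, so that $E_R$ is unambiguously $1-\delta$ and the sharpness claim does not hinge on an adversarial tie-breaking rule.
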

\begin{proof}
  Since every feasible solution of \eqref{eq:RobustEqCP} is feasible for \eqref{eq:RobustW}, it directly follows $\RobWel\leq E_R$ as the objective functions of $\RobWel$ and $E_R$ coincide for fixed $(x,y)$.
  The second inequalitiy, $E_R\leq \frac{1}{\tauU}\RobWel$, directly follows from \Cref{Thm:MainThm}.
  \\We now present an instance proving the third inequality for arbitrary $\delta\in(0,1)$.
  To this end, let $T=1$ and $d=d_1=1$. Omitting the index $t$,
  we define $\invc_i=0$ for all $i\in\Nc$, $\varc_1(u)=(1-\delta)u_1$, 
  and $\varc_i(u)=u_i$ for all $i\in\Nc\setminus\{1\}.$
  Let the polyhedral uncertainty set $\Unc$ be given by
  \begin{align*}
    \Unc=\{u\in\R^n_{\geq 0}\defsep Pu\leq r\}
  \end{align*}
  with $P\in\R^{m\times N}$ and $r\in\R^m$. Since $0\in\Unc$, w.l.o.g. we may rescale $P$ and 
  assume $r\in\{0,1\}^m$.
  In the game with robust producers, the unique equilibrium is given by $\pi=1-\delta$ and $y_1=x_1=1$. 
  Thus, the total cost of this equilibirum is $E_R=1-\delta$ in the worst case.

  In the following, we establish an upper bound for $\RobWel$:
  Inserting the parameters, we obtain
  \begin{align*}
    \RobWel=\min_{x\geq 0}~&\max_{u\in\Unc}~(1-\delta)u_1x_1+\sum_{i=2}^Nu_ix_i\\
    \st~&\sum_{i\in \Nc}x_{i}=1.
  \end{align*}
  Dualizing the inner maximization problem
  \begin{align*}
    \max_{u\in\Unc}~(1-\delta)u_1x_1+\sum_{i=2}^Nu_ix_i&\quad~
  \end{align*}
  yields
  \begin{align*}
    \quad\min_{z\geq 0}~&\sum_{i\in[m]}r_iz_i\\
    \st~&(P^T z)_1\geq (1-\delta)x_1\\
    &(P^T z)_i\geq x_i,\quad\quad i=2,...,N.
  \end{align*}
  Thus, it holds that 
  \begin{subequations}\label{WRLP}
    \begin{align}
      \RobWel=\min_{x,z\geq 0}~&\sum_{i\in[m]}r_iz_i\\
	  \st~&(P^T z)_1\geq (1-\delta)x_1\\
      &(P^T z)_i\geq x_i,\quad\quad i=2,...,N,\\
	  &\sum_{i\in \Nc}x_{i}=1.
    \end{align}
  \end{subequations}  
  We now construct a feasible solution $(\tilde{x},\tilde{z})$ for \eqref{WRLP}.
  Let $(\tu,\tauU)\in\R^{N+1}$ be an optimal basic solution of \eqref{eq:maxtau} for $\mathcal{C}=\Unc$ 
  which can be written as
  \begin{subequations}\label{eq:maxtauUnc}
    \begin{align}
      \tauU=\max_{u\geq 0, \tau}~&\tau\\
      \st~&\tau-u_i\leq 0,\quad\quad i\in\Nc\label{eq:maxtauUnc-c1}\\
      &Pu\leq r.\label{eq:maxtauUnc-c2}
    \end{align}
  \end{subequations}
  Let, w.l.o.g., $\tu_i=\tauU$ for $i\in[k]$ and $\tu_i>\tauU$
  for $i\in\Nc\setminus[k]$. Thus, the first $k$ constraints in
  \eqref{eq:maxtauUnc-c1} are tight. 
  Furthermore, as there are at least $N+1$ constraints of \eqref{eq:maxtauUnc} tight in the optimal basic 
  solution $(\tu,\tauU)$, there is a subset $J\subset [m]$ of rows of $P$ with $|J|\geq 1$ such 
  that $P_{J,\bdt}\tu=r_J$.
  \\As $(\tu,\tauU)$ is optimal, the target vector $e_{N+1}$ is a conical combination of the tight constraints in $(\tu,\tauU)$,
  i.e. 
  \begin{align*}
    e_{N+1}=\sum_{i\in[k]}\mu_i(e_{N+1}-e_i)~+~
    \begin{bmatrix}
    (P_{J,\bdt})^T\\0\cdots 0
    \end{bmatrix}\lambda
  \end{align*}
  with $\lambda\in\R^{|J|}_{\geq 0}$ and $\mu\in\R^k_{\geq 0}$.
  \\It follows that $\tilde{x}:=(P_{J,\bdt})^T \lambda\geq 0$ and especially 
  $\tilde{x}_i=0$ for all $i\in\Nc\setminus [k]$.
  Additionally, we observe $\sum_{i\in[k]}\mu_i=1$ and thus we also have $\sum_{i\in\Nc}\tilde{x}_i=1$. 
  We obtain
  \begin{align*}
    \lambda^Tr_J=\lambda^TP_{J,\bdt}\tu=\tilde{x}^T\tu=\sum_{i\in\Nc} \tilde{x}_i\tu_i=\sum_{i\in[k]} \tilde{x}_i\tu_i=\sum_{i\in[k]} \tilde{x}_i\tauU=\tauU.
  \end{align*}
  We define the vector $\tilde{z}\in\R^m_{\geq 0}$ by $\tilde{z}_i:=\lambda_i$ for $i\in J$ and 
  $\tilde{z}_i:=0$ for $i\in [m]\setminus J$.
  Since $\tilde{x}=(P_{J,\bdt})^T \lambda=P^T\tilde{z}$, $(\tilde{x},\tilde{z})$ is feasible for \eqref{WRLP}.
  \\Thus, $\RobWel\leq \sum_{i\in[m]}r_i\tilde{z}_i=\sum_{i\in J}r_i\tilde{z}_i=\lambda^Tr_J=\tauU$ and we obtain
  \begin{align*}
    \frac{E_R}{\RobWel}\geq \frac{1-\delta}{\tauU},
  \end{align*}
  concluding the proof.
\end{proof}
\begin{remark}
  In the literature, this factor between the market and the central planner is often referred to as
  \textit{price of anarchy}. Therefore, the result of the above theorem yields an explicit quantification of the price of anarchy in our robust single-stage market model\rev{, given by $\frac{1}{\tauU}.$ 
Thus,} this price can be controlled by appropriate choice of the uncertainty set against which protection is sought. 
\end{remark}
\begin{remark}
We see that the \rev{factor} between the robust market and the robust central planner depends on 
the parameter $\tauU$. We want to further discuss this parameter. 
Clearly, if $e\in\Unc$, we have $\tauU=1$.
Furthermore, from the definition of $\Unc$
as \rev{Cartesian} product of $\Unc'$ for every time step, we have $\tauU=\tau(\Unc')$. 
Also, from the assumptions on $\Unc'$ and $\Unc$ we obtain that $\tauU=\tau(\Unc')\geq \frac{1}{N}$.
Thus, an upper bound for the gap in \Cref{Thm:fixed:main-bound} is $N$.
\end{remark}

In the example showing tightness in the proof of \Cref{Thm:fixed:main-bound}, we set the nominal
values of the cost vectors to zero. 
However, in relevant applications the relative deviations are usually bounded by some factor.
We model this by introducing a parameter $\rho>0$ and assume 
$a_{i,t}\leq \rho c_i$ for all $i\in \Nc$. The previous theorem can be seen as the limiting case for $\rho \rightarrow \infty$.
\begin{theorem}\label{Thm:fixed:main-bound-restricteda}
  Let $\Unc$ be a compact and convex uncertainty set fulfilling the assumptions.
  Consider the set of restricted instances for which $a_{i,t}\leq \rho c_i$ 
  for all $i\in \Nc$ with some parameter $\rho>0$.
  Then,
  \begin{align*}
    E_R \leq \frac{1+\rho}{1+\rho\tauU} \RobWel.
  \end{align*}
  Again, for every uncertainty set $\Unc$ there are instances for which this bound is sharp.
\end{theorem}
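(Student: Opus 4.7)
The plan is to adapt the proof of \Cref{Thm:MainThm} and its market-equilibrium application in \Cref{Thm:fixed:main-bound}, exploiting the restriction $a_{i,t}\leq \rho c_i$ to scale the optimal box dual by a factor strictly larger than $\tauU$.

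For the upper bound, I would reduce to the abstract setting of \Cref{sec:robust-lin-opt}, where the restriction reads $\lambda \leq \rho c$ componentwise. Let $\theta^*$ be an optimal dual solution of \eqref{eq:Box-DLP} and let $u^*\in\Unc$ realize $\min_i u^*_i = \tauU$. Instead of scaling $\theta^*$ by $\tauU$ as in \Cref{Thm:MainThm}, I scale it by
\[
  \alpha := \frac{1+\rho\tauU}{1+\rho} \in [\tauU,1].
\]
Feasibility of $(\alpha\theta^*,u^*)$ for \eqref{eq:Rob-DLP} then boils down to the componentwise inequality $\alpha(c_i+\lambda_i) \leq c_i + \lambda_i u^*_i$, which, using $u^*_i \geq \tauU$ and $\lambda_i \leq \rho c_i$ (with the case $c_i = 0$ forcing $\lambda_i = 0$), reduces to $1-\alpha \geq \rho(\alpha-\tauU)$; the latter is exactly the defining relation of $\alpha$. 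The constraint $B^T(\alpha\theta^*)\leq d$ follows from $\alpha \leq 1$ and $B^T\theta^* \leq d$. Hence $\robv \geq \alpha\boxtv \geq \alpha\boxv$, and reinterpreted in the market terms of \Cref{Thm:fixed:main-bound} this yields $E_R \leq \frac{1+\rho}{1+\rho\tauU}\RobWel$.

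For sharpness, I would modify the tight instance from the proof of \Cref{Thm:fixed:main-bound} to respect the restriction by adding a uniform nominal cost baseline: take $T=1$, $d=1$, $\invc_i = 0$, $c_i = 1$ for all $i\in\Nc$, $a_1 = \rho(1-\delta)$, and $a_i = \rho$ for $i\in\Nc\setminus\{1\}$. In the robust market, producer~$1$ has the unique smallest worst-case marginal cost $1 + \rho(1-\delta)$ and supplies the entire demand, so $E_R = 1 + \rho(1-\delta)$. For $\RobWel$, the objective splits as $\sum_i c_i x_i + \rho\max_{u\in\Unc}\bigl[(1-\delta)u_1 x_1 + \sum_{i\geq 2}u_i x_i\bigr]$; under $\sum_i x_i = 1$ the first summand equals $1$, while the inner maximum is precisely the LP analysed in the proof of \Cref{Thm:fixed:main-bound}, whose feasible pair $(\tilde x,\tilde z)$ with $\sum_i \tilde x_i = 1$ certifies an optimum at most $\tauU$. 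Hence $\RobWel\leq 1+\rho\tauU$, giving $E_R/\RobWel \geq (1+\rho(1-\delta))/(1+\rho\tauU)$, which tends to $(1+\rho)/(1+\rho\tauU)$ as $\delta\to 0$.

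The main obstacle is the componentwise feasibility check that pins down the optimal scaling factor $\alpha$; once this calibration is in place, the upper bound reduces to LP duality exactly as in \Cref{Thm:MainThm}, and the sharpness example merely lifts the previous construction by a uniform nominal cost offset. As sanity checks, $\rho\to\infty$ recovers the bound of \Cref{Thm:fixed:main-bound}, while $\rho = 0$ forces $\lambda = 0$ and gives the trivial bound $E_R \leq \RobWel$.
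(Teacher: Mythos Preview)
Your proposal is correct and follows essentially the same approach as the paper, which presents only a proof sketch: adapt the dual scaling argument of \Cref{Thm:MainThm} using the restriction $\lambda_i\leq\rho c_i$, and for sharpness use exactly the instance you describe. You have in fact supplied the key detail the paper omits, namely the explicit scaling factor $\alpha=(1+\rho\tauU)/(1+\rho)$ and the verification that it makes $(\alpha\theta^*,u^*)$ feasible for \eqref{eq:Rob-DLP}; the sharpness construction and the resulting ratio $(1+\rho(1-\delta))/(1+\rho\tauU)$ coincide verbatim with the paper's.
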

\begin{proof}[Proof sketch]
  Due to the similarity with the proof of \Cref{Thm:fixed:main-bound}, we aim at brevity here. 
  The bound is shown by adapting the proof of \Cref{Thm:MainThm}. In addition, we use the   assumption that $\lambda_i\leq \rho c_i$ for all $i\in[n]$.
  
  For showing that the bound is sharp, we construct the following instance for arbitrary $\delta\in(0,1)$.
  Let $T=1$ (we omit the index $t$) and $d=1$. 
  We define $\invc_i=0$ for all $i\in\Nc$, $\varc_1(u)=1+\rho (1-\delta)u_1$, 
  and $\varc_i(u)=1+\rho u_i$ for all $i\in\Nc\setminus\{1\}.$
  Analogously to the proof of \Cref{Thm:fixed:main-bound}, we now can show that
  \begin{align*}
    \frac{E_R}{\RobWel}\geq \frac{1+\rho(1-\delta)}{1+\rho\tauU}
  \end{align*}
  on this instance.
\end{proof}
  
The previous theorems show that a robust hedging against uncertainties in the variable costs leads to
a gap between the optimal robust central planner and the worst case total cost in an equilibrium
with robust producers.
This upper bound on this gap depends on the one hand on the relative size of the deviations 
from the nominal costs and on the other hand on the geometry of the uncertainty set given by $\tauU$.
In  \Cref{sec:elastic-demand-strict}, where we assume elastic instead of fixed demand, 
we will see that this gap is not bounded anymore.


\subsection{Two-Stage}

In this section we analyze the more sophisticated approach of 
adjustable robustness. 
While the investment decision still has to be made before the uncertainty 
realizes, the allocation of the production is chosen after the uncertainty 
realizes. For many market situation this is a relatively natural setup, where long run capacity decisions have to be made here-and-now before the uncertainty manifests itself but short run decisions can be taken once uncertainty has unravelled to large extent. 
The corresponding adjustable robust central planner problem can be 
stated as follows. 
\begin{subequations}\label{eq:ARobustW}
  \begin{align}
    \ARobWel~:=~\min_{y\geq 0}~\max_{u\in\Unc}~\min_{x\geq 0}~
      &\sum_{i\in \Nc}(\invc_iy_i
      +(\sum_{t\in \T}(\varc_i+a_iu_{i,t})x_{i,t}))\\
    \st~&~x_{i,t}\leq y_i,\quad i\in \Nc,t\in \T,\\
    &\sum_{i\in \Nc}x_{i,t}=d_t,\quad t\in \T.
  \end{align}
\end{subequations}
As expected from \Cref{sec:robust-lin-opt}, this additional freedom of waiting with decisions 
until the realization of the uncertainty does not improve the objective value compared to the strict
robust central planner. 
\Cref{Thm:TwoStageRob} is directly applicable to these robust central planner problems and 
yields the following. 
\begin{theorem}\label{Thm:fixed:ARobustCPEquivalence}
  The optimal objective value of the robust central planner and the adjustable robust central planner
  are equal, i.e.
  \begin{align*}
    \RobWel=\ARobWel.
  \end{align*}
\end{theorem}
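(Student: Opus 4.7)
The plan is to reduce the claim directly to Theorem~\ref{Thm:TwoStageRob}. The robust central planner problem \eqref{eq:RobustW} already has the structure required by that result: the feasible set defined by $x_{i,t}\leq y_i$, $\sum_{i}x_{i,t}=d_t$ and $x,y\geq 0$ is a convex polyhedron that does \emph{not} depend on $u$, while the uncertainty enters only in the objective and does so linearly, through the scalar perturbations $a_i u_{i,t}$ of the nominal variable costs $\varc_i$. Concretely, I would identify $y$ with the here-and-now block, $x$ with the adjustable block, the nominal cost vector with $\invc$ for $y$ and with $\varc_i$ (repeated over $t$) for $x$, and $\Lambda$ with the diagonal matrix whose $(i,t)$-entry equals $a_i$. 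Under this identification, \eqref{eq:RobustW} is exactly an instance of \eqref{eq:Rob-MMP} and \eqref{eq:ARobustW} is exactly an instance of \eqref{eq:ARob-MMP}.

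The main obstacle is the hypothesis in Theorem~\ref{Thm:TwoStageRob} that $\X$ be a convex \emph{polytope}, i.e. bounded, whereas the investment variables $y_i$ are a priori unbounded above. To handle this, I would argue that one may add the redundant upper bound $y_i \leq d_{\max} := \max_{t\in\T} d_t$ without changing the optimal value of either \eqref{eq:RobustW} or \eqref{eq:ARobustW}. Indeed, in any feasible $(x,y)$ the market clearing and nonnegativity constraints imply $x_{i,t}\leq d_t\leq d_{\max}$, so replacing $y_i$ by $\max_t x_{i,t}$ preserves feasibility and, since $\invc_i\geq 0$, can only decrease the objective value for every realization of $u$. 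With this truncation the feasible set becomes a bounded polyhedron, hence a convex polytope, and both robust optima are preserved.

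Having reduced to the setting of Theorem~\ref{Thm:TwoStageRob}, applying it yields $\robv=\ARv$, which in the present notation is exactly $\RobWel=\ARobWel$. As a sanity check, the easy direction $\ARobWel \leq \RobWel$ follows from the fact that any feasible pair $(x,y)$ for the static problem yields a (constant) policy $u\mapsto x$ for the adjustable one with the same worst-case cost; the nontrivial direction $\RobWel \leq \ARobWel$ is precisely the content borrowed from Theorem~\ref{Thm:TwoStageRob}.
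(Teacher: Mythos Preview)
Your proposal is correct and follows exactly the paper's approach, namely a direct appeal to Theorem~\ref{Thm:TwoStageRob}; in fact you are more careful than the paper, which invokes that theorem without explicitly verifying that the feasible set can be taken to be a polytope. Your truncation argument $y_i\le d_{\max}$ is valid for both \eqref{eq:RobustW} and \eqref{eq:ARobustW} (in the adjustable case because $x_{i,t}(u)\le d_{\max}$ for every $u$), so the reduction goes through.
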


Next, we move to the adjustable robust market problem where the producers act as players who take robust decisions within an adjustable framework. We assume that the producers know exogenously given price 
functions $\pi_t(u)$ before they invest.
Hence, every adjustable robust player solves the problem
\begin{subequations} \label{eq:newARobustEq} 
  \begin{align}
    \max_{y_i\geq 0}~\min_{u\in\Unc}~\max_{x_i\geq 0}~
      &\sum_{t\in\T}\left((\pi_t(u)-\varc_i(u)) x_{i,t}\right)-\invc_i y_i\\
    \st~&~x_{i,t}\leq y_i, \quad t\in\T.
  \end{align}
\end{subequations}

A triple $(\pi(\cdot),y,x(\cdot))$ is an equilibrium of this adjustable robust market, 
if it satisfies the optimality conditions of all adjustable robust producers \eqref{eq:newARobustEq} 
and the market clearing condition \eqref{eq:MCfix}. 
In the nominal case, prices supporting an equilibrium are given by the dual variables of the market
clearing condition of the central planner problem. 
However, it is not obvious how to compute equilibrium supporting price functions in this adjustable
robust setting. 
The following examples show that the optimal dual variables of the market clearing conditions in the 
various robust central planner formulations do not necessarily give us market clearing price functions. 

\begin{example}\label{Ex:fixed:PricesReform}
  Consider a market with two time periods and $N=2$ producers which have costs of
  \begin{align*}
    \invc_1=\invc_2=1, ~\varc_{i,t}=1+u_{i,t}, \text{ for } i=1,2,~t=1,2.
  \end{align*}
  We assume an uncertainty set given by 
  ${\Unc=\{u\in\R^{2\times 2}_{\geq 0}\defsep u_{1,t}+u_{2,t}\leq 1,~t=1,2\}}$ and demands 
  of ${d_1=1}$ and ${d_2=2}$. 
  The optimal dual variables of the market clearing conditions \eqref{eq:RobustEqCP-MC} are 
  $\pi_1=2$ and $\pi_2=3$.
  If we use these prices in the market with adjustable robust producers \eqref{eq:newARobustEq}, 
  and force $y_1+y_2=2$ in order to fulfill market clearing in the second time period, 
  the producers produce more than the demand in the first time period if $u_{1,1}<1$ and $u_{2,1}<1$.
\end{example}

\begin{example}\label{Ex:fixed:PricesRobandARob}
  Consider a market in one time period with fixed demand $d=2$. 
  There are $N=2$ producers which have costs of
  \begin{align*}
    \invc_1=\invc_2=1, ~\varc_1=u_1,~\varc_1=u_2
  \end{align*}
  \rev{with the uncerainty set $\Unc=\{u\in\R^2_{\geq 0}\defsep u_1+u_2\leq 1\}.$ 
  In the problem of robust central planner \eqref{eq:RobustW}, 
  we can replace this uncertainty set by its vertices 
  $\{u^1=(1,0),u^2=(0,1), u^3=(0,0)\}$ and obtain the following equivalent reformulation of the robust central planner \eqref{eq:RobustW}:}
  \begin{subequations}\label{eq:RobustW-Example2}
  \begin{align}
    \RobWel=\min_{x,y,\tau\geq 0}~&\tau+y_1+y_2\\
    \st~&\tau\geq x_i, \quad\quad i=1,2,\\
    &x_i\leq y_i, \quad\quad i=1,2,\\
    &x_1+x_2=2.\label{eq:RobustW-Example2-MC}
  \end{align}
  \end{subequations}
  The optimal dual variable of \eqref{eq:RobustW-Example2-MC} is given by $\pi=\frac{3}{2}$. 
  However, for this market price, no producer will invest in the adjustable robust market, 
  as in the worst case the sum of investment and variable cost of each producer is $2$.
  Next, we consider the adjustable robust central planner. 

  For the same instance, the adjustable robust central planner \eqref{eq:ARobustW} can be equivalently
  reformulated to 
  \begin{subequations}\label{eq:ARobustW-Example22}
  \begin{align}
    \ARobWel=\min_{x,y,\tau\geq 0}~&\tau+y_1+y_2\\
    \st~&\tau\geq x_1(u^1),\\
    &\tau\geq x_2(u^2),\\
    &x_i(u^j)\leq y_i,\quad\quad i=1,2,~j=1,2,3,\\
    &x_1(u^j)+x_2(u^j)=2,\quad\quad j=1,2,3.\label{eq:ARobustW-Example22-MC}
  \end{align}
  \end{subequations}
  The optimal dual variables of \eqref{eq:ARobustW-Example22-MC} are 
  $\pi(u^1)=\pi(u^2)=0.75$ and $\pi(u^3)=0$.
  Choosing these dual variables as prices for the market with adjustable robust producers, 
  the producers would not invest in any capacity as the prices are too low to cover the investment 
  costs.
\end{example}

These examples show that the optimal dual variables of the market clearing conditions of the 
equivalent reformulation of the (strict) robust market \eqref{eq:RobustEqCP}, 
the robust central planner \eqref{eq:RobustW}, and the adjustable robust central 
planner \eqref{eq:ARobustW} do not support equilibria in the market with adjustable robust 
producers in general.
We conclude from this that we need to find a more sophisticated alternative computing 
prices in the adjustable robust market. 
We will be able to resolve this problem in \Cref{sec:elastic:twostage}
for the more interesting setting where demands are not fixed but given via an elastic demand function.


\section{Robust Peak Load Pricing with Elastic Demand}\label{sec:elastic-demand}

In this section, we discuss strict robustness as well as adjustable robustness for the peak 
load model with uncertain variable costs where demand is elastic. 
Whereas the case of fixed, unelastic demand is typically easier to be treated technically, 
as seen in the previous section, most standard market models indeed rely on the assumption of 
elastic demand which formally results from the utility maximization of customers. 
For a formal treatment and the microeconomic foundation of demand functions 
see e.g. \cite{Mascollell1995}.

We make the same assumptions on the uncertainty set $\Unc$ as in \Cref{sec:fixed-demand}.

\subsection{Single Stage}\label{sec:elastic-demand-strict}
As in the case of fixed demand, we first consider the strict robust approach. 
Again, all decisions have to be made before the realizations of the uncertainty are revealed. 
We assume, that in the market only the producers take care of the uncertainty, 
the demand side is unchanged to the nominal case.
Therefore, for exogenously given market prices $\pi_t$, each producer solves the same problem 
as in the case of fixed demand, i.e. the problem
\begin{align*}
  \max_{x_i,y_i\geq 0}~&\sum_{t\in \T}((\pi_t-(\varc_i+a_i))x_{i,t})
    -\invc_iy_i\\
  \st~&x_{i,t}\leq y_i,\quad\quad t\in \T
\end{align*}
maximizing their own profit in the worst case. 
As in the case of fixed demand, this market with strict robust producers is equivalent to a 
nominal market with modified variable costs $c_{i,t}:=\varc_{i,t}+a_i$. 
We can compute the market prices that lead to an equilibrium by solving the equivalent 
optimization reformulation
\begin{subequations}\label{eq:RobustEqCPel}   
  \begin{align}
    \max_{x,y\geq 0}~&\sum_{t\in\T}\int_0^{\bar{x}_t}\rev{p_t(s)}ds
      -\sum_{i\in \Nc}(\invc_iy_i+(\sum_{t\in \T}(\varc_i+a_{i,t})x_{i,t}))\\
    \st~&x_{i,t}\leq y_i,\quad i\in \Nc,t\in \T.
  \end{align}
\end{subequations}
For an optimal solution $(x^*,y^*)$ of \eqref{eq:RobustEqCPel}\rev{, which exists due to the same reasons as in the nominal case,} we define
\begin{align*}
  E'_R:=\min_{u\in\Unc} \sum_{t\in\T}\int_0^{\bar{x}^*_t}\rev{p_t(s)}ds
    -\sum_{i\in \Nc}(\invc_iy^*_i+(\sum_{t\in \T}(\varc_i+a_iu_{i,t})x^*_{i,t})).
\end{align*}
On the other hand, the robust central planner problem is given by
\begin{subequations}\label{eq:RobustWel}
  \begin{align}
    C'_R:=\max_{x,y\geq 0}~&\min_{u\in\Unc}~
      \sum_{t\in\T}\int_0^{\bar{x}_t}\rev{p_t(s)}ds-\sum_{i\in \Nc}(\invc_iy_i
      +(\sum_{t\in \T}(\varc_i+a_iu_{i,t})x_{i,t}))\\
    \st~&~x_{i,t}\leq y_i,\quad\quad i\in \Nc,t\in \T.
  \end{align}
\end{subequations}
It is clear, that $C'_R\geq E'_R$ holds. 
In the case of fixed demand, the welfare is just the negative total cost. We have shown that 
the ratio of $E_R$ to $\RobWel$ is bounded by some 
parameter which can be computed directly from the uncertainty set $\Unc$. 
However, in this section the welfare $W_R'$ is complemented by a \rev{nonlinear} term 
reflecting the produced amount. 
\revv{As shown next}, this leads to the unboundedness of the ratio, \rev{even in the case of affine demand function}.

\begin{theorem}\label{Thm:elastic:poaunbounded}
  For any $\eta>0$ there exist instances such that $C'_R>\eta E'_R$
\end{theorem}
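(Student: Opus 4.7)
My plan is to prove unboundedness by constructing an explicit one-parameter family of instances for which the ratio $C'_R/E'_R$ diverges. The intuition is that in the strict robust market with elastic demand, each producer hedges individually against its own worst-case marginal cost, so the clearing price must absorb the full worst-case markup; with a shallow demand curve, this collapses the equilibrium quantity (and hence $E'_R$) to nearly zero. In contrast, the central planner can split production across firms and exploit the correlation structure of $\Unc$ to reduce the effective worst-case marginal cost per unit, retaining a nontrivial amount of consumer surplus.

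Concretely, I take $T=1$, $N=2$ producers with $\invc_1=\invc_2=0$, nominal variable costs $\varc_1=\varc_2=0$, scalings $a_1=a_2=1$, uncertainty set $\Unc=\{u\in\R^2_{\geq 0}\defsep u_1+u_2\leq 1\}$ (so that $\tauU=1/2$), and affine inverse demand $p(x)=1+\epsilon-x$ for a small parameter $\epsilon\in(0,1)$. In any strict robust equilibrium each producer faces worst-case unit cost $1$ with zero investment cost, so the clearing price must satisfy $\pi=1$, and market clearing $p(\bar x)=1$ then forces $\bar x=\epsilon$. For every equilibrium split $(x_1^*,x_2^*)$ with $x_1^*+x_2^*=\epsilon$ one has $\max_{u\in\Unc}(u_1 x_1^*+u_2 x_2^*)=\max(x_1^*,x_2^*)\geq \epsilon/2$, so uniformly over equilibrium selections
$E'_R \leq \int_0^\epsilon(1+\epsilon-s)\,ds - \epsilon/2 = \epsilon/2 + \epsilon^2/2.$

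For $C'_R$, the inner worst-case cost in \eqref{eq:RobustWel} equals $\max(x_1,x_2)$; for fixed total $\bar x$ this is minimized by the equal split and equals $\bar x/2$. Hence $C'_R = \max_{\bar x\geq 0}[(1+\epsilon)\bar x - \bar x^2/2 - \bar x/2] = (1/2+\epsilon)^2/2$, which is bounded below by a positive constant independent of $\epsilon$. Combining the two estimates gives $C'_R/E'_R \geq (1/2+\epsilon)^2/(\epsilon+\epsilon^2) \to +\infty$ as $\epsilon\to 0^+$, so given any $\eta>0$ a sufficiently small $\epsilon$ yields $C'_R>\eta\,E'_R$.

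The only delicate point, and the main obstacle worth addressing, is the non-uniqueness of the equilibrium split $(x_1^*,x_2^*)$ on the producer side; however the bound $\max(x_1^*,x_2^*)\geq\bar x/2$ holds uniformly over all splits, so the estimate of $E'_R$ is independent of the selected equilibrium. Conceptually, the key observation underlying the construction is that, in stark contrast to the fixed-demand bound $1/\tauU$ of \Cref{Thm:fixed:main-bound}, the consumer-surplus integral converts the $O(1)$ multiplicative gap in producer cost into an unbounded welfare ratio precisely because the equilibrium quantity is driven to zero by the worst-case markup absorbed into the price.
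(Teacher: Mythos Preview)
Your proof is correct and follows essentially the same construction as the paper: two symmetric producers with zero nominal and investment costs, the simplex uncertainty set $\Unc=\{u\geq 0: u_1+u_2\leq 1\}$, and an affine inverse demand $p(s)=\alpha-s$ whose intercept is pushed towards the individual worst-case cost (your $\alpha=1+\epsilon$). The only difference is that the paper perturbs producer~2's cost by a tiny $\varepsilon>0$ to force a unique equilibrium in which only producer~1 is active, whereas you keep full symmetry and instead bound $E'_R$ uniformly over all optimal splits via $\max(x_1^*,x_2^*)\geq \bar x/2$; both devices handle the same non-uniqueness issue and lead to the same asymptotics.
\endinput
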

\begin{proof}\label{example:PoaUnboundedEl}
  We construct an example with the desired property. Let $\Nc=2$ and $T=1$.
  Furthermore, let $\invc_1=\invc_2=0$ and $\varc_1=u_1$ and $\varc_2=u_2+\varepsilon$ 
  with $0<\varepsilon\ll 1$.
  The uncertainty set is given by $\Unc=\{u\in\R^2_{\geq 0}\defsep u_1+u_2\leq 1\}$.
  
  We consider \rev{an affine} demand function $\rev{p(s)}=\alpha-s$. In the market with robust producers, 
  only producer 1 produces as he is slightly cheaper in the worst case.
  Therefore, \eqref{eq:RobustEqCPel}  reads
  \begin{align*}
    &\max_{x\geq 0} \int_0^{x_1+x_2}(\alpha-s)ds-x_1-(1+\varepsilon)x_2\\
    &=~\max_{x_1\geq 0} \int_0^{x_1}(\alpha-s)ds-x_1
	~=~\max_{x_1\geq 0} (\alpha-1)x_1-\frac{1}{2}x_1^2.
  \end{align*}
	For $\alpha>1$, the maximum is attained at $x_1=\alpha-1$. Thus,
	\begin{align*}
	  E'_R=(\alpha-1)^2-\frac{1}{2}(\alpha-1)^2
	    =\frac{(\alpha-1)^2}{2}
	\end{align*}
	for $\alpha>1$ and $E'_R=0$ for $\alpha\leq 1$.
	However, the optimal robust welfare is computed by 
	\begin{align*}
	   C'_R=\max_{x\geq 0}\min_{u\in\Unc}\int_0^{x_1+x_2}(\alpha-s)ds-x_1u_1-x_2u_2,
	\end{align*}
	where we omit the neglectable factor $\varepsilon$. Due to the 
	\rev{symmetry} of $\Unc$, the optimal solution is attained for $x_1=x_2$.
	Hence,
	\begin{align*}
	  C'_R=\max_{d\geq 0}\int_0^{d}(\alpha-s)ds-\frac{1}{2}d
	  =\max_{d\geq 0}(\alpha-\dfrac{1}{2})d-\frac{1}{2}d^2.
	\end{align*}
	For $\alpha>0.5$, the maximum is attained at $d=\alpha-0.5$ and therefore 
	we obtain
	\begin{align*}
	  C'_R=\frac{(\alpha-\frac{1}{2})^2}{2}
	\end{align*}
	for $\alpha>0.5$ and $C'_R=0$ for $\alpha\leq 0.5$.
	Thus, for $\alpha>1$ we have
	\begin{align*}
	  \frac{C'_R}{E'_R}=\frac{(\alpha-\frac{1}{2})^2}{(\alpha-1)^2}.
	\end{align*}
	For $\alpha\in(0.5,1]$, there is no production in the market, 
	but in the optimal robust welfare solution there is still some small 
	symmetrical production. Hence, in this interval the ratio is infinity. 
	For $\alpha\leq 0.5$, there is no production in both settings.
\end{proof}


\subsection{Two-Stage}\label{sec:elastic:twostage}
Again, we discuss the more sophisticated approach of adjustable robustness.
The adjustable robust central planner under elastic demand can be 
stated analogously as in the corresponding section on fixed demand by 
\begin{subequations}\label{eq:ARobustWel}
      \begin{align}
        C'_{AR}:=\max_{y\geq 0}~\min_{u\in\Unc}~\max_{x\geq 0}~
          &\sum_{t\in\T}\int_0^{\bar{x}_t}\rev{p_t(s)}ds
          -\sum_{i\in \Nc}(\invc_iy_i
          +(\sum_{t\in \T}(\varc_i+a_iu_{i,t})x_{i,t}))\\
        \st~&~x_{i,t}\leq y_i,\quad\quad i\in \Nc,t\in \T.
    \end{align}
    \end{subequations}

As in the case of fixed demand, \Cref{Thm:TwoStageRobConv} directly yields the following statement.
\begin{theorem}\label{Thm:elastic:ARobustCPEquivalence}
  The optimal Robust Welfare and the optimal Adjustable Robust Welfare are 
  equal, i.e.
  \begin{align*}
    C'_{R}=C'_{AR}.
  \end{align*}
\end{theorem}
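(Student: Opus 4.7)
The plan is to derive this as a direct application of \Cref{Thm:TwoStageRobConv}, in exactly the same spirit as \Cref{Thm:fixed:ARobustCPEquivalence} was obtained from \Cref{Thm:TwoStageRob} in the fixed-demand case. The new feature relative to the fixed-demand setting is the concave consumer-surplus term in the objective, which is precisely the reason that \Cref{Thm:TwoStageRobConv} (the extension covering a convex nonlinear term $g$) was proved beforehand.

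First I would negate the objectives of \eqref{eq:RobustWel} and \eqref{eq:ARobustWel} so that the welfare maximizations become cost minimizations. Under negation, the inner $\min_{u\in\Unc}$ flips to a $\max_{u\in\Unc}$, and the two problems align with $\crobv$ and $\cARv$ from \Cref{Thm:TwoStageRobConv}. I would set the deterministic convex term as
\[
 g(x) := -\sum_{t\in\T}\int_0^{\bar{x}_t} p_t(s)\,ds,
\]
which is convex in $x$ because each $p_t$ is strictly decreasing, so $\int_0^{\bar{x}_t} p_t(s)\,ds$ is concave in $\bar{x}_t$ and hence in $x$. The remaining affine part $\sum_i \invc_i y_i + \sum_{i,t}(\varc_i + a_i u_{i,t}) x_{i,t}$ fits the $d^T y + (c+\Lambda u)^T x$ template of \Cref{Thm:TwoStageRobConv} with $\Lambda$ the diagonal matrix carrying the coefficients $a_i$ and with $u$ ranging over the polyhedral $\Unc$.

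The step that requires attention is the hypothesis in \Cref{Thm:TwoStageRobConv} that $\X$ be a convex polytope, \ie{} bounded. The raw feasible set $\{(x,y)\geq 0\colon x_{i,t}\leq y_i\}$ is only a polyhedron. To handle this I would argue that the optimal values $C'_R$ and $C'_{AR}$ are unchanged if we additionally impose $y_i\leq M$ and $x_{i,t}\leq M$ for a sufficiently large $M$: since investment costs $\invc_i$ are strictly positive and each $p_t$ is strictly decreasing, the marginal welfare from installing or producing above a certain threshold is strictly negative for every realization $u\in\Unc$, so truncating the feasible set to a bounded polytope leaves both the strict-robust and the adjustable optima intact.

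The main obstacle I anticipate is exactly this truncation: one has to pick the constant $M$ uniformly in $u\in\Unc$ and simultaneously for both formulations, which typically requires a mild regularity condition on the demand functions (\eg{} that $p_t(z)$ eventually becomes nonpositive), consistent with the paper's standing assumption that an optimum exists. Once $\X$ is a convex polytope, \Cref{Thm:TwoStageRobConv} applied to the negated problems yields $-C'_R = -C'_{AR}$, and hence $C'_R = C'_{AR}$, completing the proof.
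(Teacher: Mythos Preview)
Your proposal is correct and follows precisely the route the paper takes: the paper simply states that \Cref{Thm:TwoStageRobConv} ``directly yields'' the result, and your write-up supplies exactly the instantiation (negate the objective, set $g(x)=-\sum_{t}\int_0^{\bar{x}_t}p_t(s)\,ds$, identify the linear part with $c,d,\Lambda$) that makes this application explicit. If anything you are more careful than the paper, since you flag and resolve the boundedness hypothesis on $\X$ via a harmless truncation argument, which the paper leaves implicit.
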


In the adjustable robust market, we assume, that the producers know exogenously given price 
functions $\pi_t(u)$ before they invest.
Hence, every adjustable robust player solves the problem
\begin{subequations} \label{eq:newARobustEqel} 
  \begin{align}
    \max_{y_i\geq 0}~\min_{u\in\Unc}~\max_{x_i\geq 0}~
      &\sum_{t\in\T}\left((\pi_t(u)-\varc_i(u)) x_{i,t}\right)-\invc_i y_i\\
    \st~&~x_{i,t}\leq y_i, \quad\quad t\in\T.
  \end{align}
\end{subequations}
\rev{A triple $(\pi(\cdot),y,x(\cdot))$ is an equilibrium of this adjustable robust market, 
if it satisfies the optimality conditions of all adjustable robust producers \eqref{eq:newARobustEqel} 
and of the consumer \eqref{eq:Demandel}.}
We first show that in general there is a gap between the welfare of an equilibrium and the 
optimum of the adjustable robust central planner by the following example.

\begin{example}
Consider an instance with only one timestep, i.e. $T=1$, and two producers, i.e. $N=2$, \rev{and an elastic demand function given by $p(s)=5-s$}.
We define investment costs $\invc_i=\frac{1}{5}$ and nominal variable costs 
$\varc_i=0$ for $i=1,2$, 
\rev{uncertainty set $\Unc=\text{conv}\{(0,0),(1,0),(0,1),(\frac{3}{4},\frac{3}{4})\}$ and $a_1=a_2=4$. Again, as in \Cref{Ex:fixed:PricesRobandARob}, it suffices to consider the four extreme points of the scaled polytope, namely}
$u^0=(0,0)$, $u^1=(4,0)$, $u^2=(0,4)$, and $u^3=(3,3)$.

For the solution of the adjustable robust central planner \eqref{eq:ARobustWel}, the worst case 
scenario is $u^3$ and the optimality condition reads
${5-(y^*_1+y^*_2) - 3 - \frac{1}{5}=0}$ which yields optimal capacities $y^*_1=y^*_2=0.9$.
The optimal solutions and objective values for these capacities are given by 
\begin{center}
\begin{tabular}{ r|c|c|c|c } 
 & $u^0$ & $u^1$ & $u^2$ & $u^3$ \\ \hline
 optimal objective value & $7.02$ & $3.74$ & $3.74$ & $1.62$\\ 
 optimal production $x^*$ & $(0.9,0.9)$ & $(0.1,0.9)$ & $(0.9,0.1)$ & $(0.9,0.9)$\\ 
\end{tabular}
\end{center}
Thus, the adjustable robust central planner has a worst case objective value of
$C'_{AR}=1.62$.

There are no market prices $\pi^*(\cdot)$ that would extend the central planner 
solution to an equilibrium of the market with adjustable robust producers.
However, for this simple example we can compute an equilibrium by hand.
In fact, an equilibrium is given by $(\pi',y',x')$ with
\begin{align*}
&\pi'=4.2, \\
&y'=(0.4,0.4),\\
&x'=(0.4,0.4), 
\end{align*}
which can be verified by simple calculations.
In case of scenario $u^3$, the welfare of this solution is given by 
\begin{align*}
  2 \cdot 0.4+2 \cdot 0.4-\frac{1}{2} \cdot 0.8^2-0.2 \cdot 0.8=0.92,
\end{align*}
which is smaller than $C'_{AR}=1.62$.
\end{example}

\noindent The question now is, how to compute equilibrium supporting price functions 
in general. 
In the case of fixed demand, we showed in different examples that the usual approach, 
\rev{using} the dual variables of the central planner, does not work. 
Similar examples can be constructed for this case of elastic demand by replacing the 
fixed demands in \Cref{Ex:fixed:PricesReform} and \Cref{Ex:fixed:PricesRobandARob} 
by suitable elastic demand functions. 

We postpone this discussion to future research and instead discuss an approach
to induce a welfare optimal equilibrium.
For this, we consider the idea of subsidizing the producers for their investment. 
In detail, additionally to the market prices $\pi_t(\cdot)$, 
each player $i\in\Nc$ receives a subsidy payment $\eta_i$ for building capacity. 
Therefore, each player solves the problem

\begin{subequations} \label{eq:newARobustEqSubel} 
  \begin{align}
    \max_{y_i\geq 0}~\min_{u\in\Unc}~\max_{x_i\geq 0}~
      &\sum_{t\in\T}\left((\pi_t(u)-\varc_i(u)) x_{i,t}\right)-(\invc_i-\eta_i) y_i\\
    \st~&~x_{i,t}\leq y_i, \quad\quad t\in\T.
  \end{align}
\end{subequations}

We now show how to compute market prices and subsidies supporting an equilibrium equal 
to the solution of the robust central planner problem \eqref{eq:RobustWel}.
For this, we follow an idea from \cite{ONeill:2005} for markets with integer variables.
Let $u\in\Unc$ be arbitrary but fixed and let $y^*$ be the optimal capacities in the 
strict robust central planner problem \eqref{eq:RobustWel}. 
We consider the problem
\begin{subequations}\label{eq:RobustWel-fixedy}
  \begin{align}
       \max_{x,y\geq 0}~
          &\sum_{t\in\T}\int_0^{\bar{x}_t}p_t(s)ds
          -\sum_{i\in \Nc}(\invc_iy_i
          +\sum_{t\in \T}\varc_{i,t}(u)x_{i,t})\\
        \st~&~x_{i,t}\leq y_i,\quad\quad i\in \Nc,t\in \T,\\
        &~y_i=y_i^*,\quad\quad i\in\Nc.
  \end{align}
\end{subequations}
The KKT-conditions of this problem are given by 
\begin{subequations}\label{eq:RobustWel-fixedy-KKT}
  \begin{align}
    p_t(\bar{x}_t)-\varc_{i,t}(u)-\mu_{i,t}+\phi_{i,t}&=0,\label{eq:KKTgradx}
      \quad\quad i\in\Nc,t\in\T,\\
    -\invc_i+\sum_t\mu_{i,t}+\chi_i+\eta_i&=0,\quad\quad i\in\Nc,\label{eq:KKTgrady}\\
    x_{i,t}-y_i&\leq 0,\quad\quad i\in\Nc,t\in \T,\label{eq:KKTcap}\\
    y_i-y_i^*&= 0,\quad\quad i\in\Nc,\label{eq:KKTystar}\\
    \mu_{i,t}(x_{i,t}-y_i)&=0,\quad\quad i\in\Nc,t\in\T,\label{eq:KKTcomplmu}\\
    \phi_{i,t}x_{i,t}&=0,\quad\quad i\in\Nc,t\in\T,\label{eq:KKTcomplx}\\
    \chi_iy_i&=0,\quad\quad i\in\Nc,t\in\T,\label{eq:KKTcomply}\\
    x,y,\mu,\phi,\chi&\geq 0.\label{eq:KKT0}
  \end{align}
\end{subequations}
We define the price functions by $\pi^*_t(u)=p_t(\bar{x}^*_t(u))$, where $x^*(u)$ is 
the optimal solution to \eqref{eq:RobustWel-fixedy} for the respective $u$. 
For every $i\in\Nc$ we compute the subsidy payment by 
\begin{subequations}\label{eq:Whyp2}
  \begin{align}
    \eta^*_i=\max_{x,y,\mu,\phi,\chi,\eta} ~&\eta_i\\
    \st ~&\exists u\in \Unc: (x,y,\mu,\phi,\chi,\eta)\text{ solve }
      \eqref{eq:RobustWel-fixedy-KKT}
  \end{align}
\end{subequations}
\begin{lemma}\label{lemma:eta-worstcase}
  Let $x^*(u)$ be the optimal solution of \eqref{eq:RobustWel-fixedy} for given $u\in\Unc$.
  Then, for every producer $i$ with $y^*_i>0$, 
  \begin{align}\label{eq:lemmaeta}
    \eta^*_i=\invc_i
      +\max_{u\in\Unc}\left\{\sum_{t\in\T_{i,u}}(\varc_{i,t}(u)-p_t(\bar{x}^*_t(u)))\right\}
  \end{align}
  with $\T_{i,u}(x^*)=\{t\in\T\defsep x^*_{i,t}(u)>0\}$.
\end{lemma}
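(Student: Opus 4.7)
The plan is to exploit the KKT system \eqref{eq:RobustWel-fixedy-KKT} directly: I will eliminate the multipliers $\mu$, $\phi$, and $\chi$ and express $\eta_i$ as an explicit function of $u$ (and of the fixed optimizer $x^*(u)$), after which the outer maximization in \eqref{eq:Whyp2} collapses to a maximization over $u\in\Unc$ alone.

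The first step is to use the hypothesis $y_i^*>0$. Together with the complementarity condition \eqref{eq:KKTcomply} this forces $\chi_i=0$, and \eqref{eq:KKTgrady} then rearranges to $\eta_i = \invc_i - \sum_{t\in\T}\mu_{i,t}$. It therefore suffices to pin down each $\mu_{i,t}$ in terms of $u$.

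For this I split $\T$ into $\T_{i,u}$ and its complement and argue by cases. For $t\notin \T_{i,u}$, i.e.\ $x_{i,t}^*(u)=0$, the capacity slack $x_{i,t}-y_i=-y_i^*<0$ together with the complementarity condition \eqref{eq:KKTcomplmu} forces $\mu_{i,t}=0$. For $t\in \T_{i,u}$ we have $x_{i,t}^*(u)>0$, so \eqref{eq:KKTcomplx} gives $\phi_{i,t}=0$, and substituting into the stationarity condition \eqref{eq:KKTgradx} yields $\mu_{i,t}=p_t(\bar{x}_t^*(u))-\varc_{i,t}(u)$. Combining the two cases produces exactly the inner expression of \eqref{eq:lemmaeta} for the given $u$, and taking the maximum over $u\in\Unc$ delivers the claim.

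The main obstacle is essentially bookkeeping: one must verify that $\mu_{i,t}$ is unambiguously pinned down by the KKT conditions on each of the two index subsets above, so that the maximization over the entire KKT feasibility set in \eqref{eq:Whyp2} reduces cleanly to a maximization over $u\in\Unc$ of a specific scalar function of $u$. Non-uniqueness of the individual components $x_{i,t}^*(u)$ does not affect the argument, since the lemma statement fixes a particular optimizer $x^*(u)$ and $\T_{i,u}$ is defined with respect to that optimizer; strict concavity of the welfare integral moreover ensures that the aggregate $\bar{x}_t^*(u)$ appearing in the formula is unique.
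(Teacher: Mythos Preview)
Your argument is correct and follows essentially the same route as the paper's proof: both use $y_i^*>0$ with \eqref{eq:KKTcomply} to eliminate $\chi_i$, rewrite $\eta_i=\invc_i-\sum_t\mu_{i,t}$ via \eqref{eq:KKTgrady}, and then substitute the stationarity condition \eqref{eq:KKTgradx} to express $\mu_{i,t}$ in terms of $u$. The only difference is presentational: the paper substitutes $\mu_{i,t}=p_t(\bar{x}_t)-\varc_{i,t}(u)+\phi_{i,t}$ and rewrites \eqref{eq:Whyp2} as the minimization $\invc_i-\min\sum_t\bigl(p_t(\bar{x}_t)-\varc_{i,t}(u)+\phi_{i,t}\bigr)$, leaving the evaluation of that minimum implicit, whereas you carry out the case split on $\T_{i,u}$ explicitly and thereby actually close the gap to the claimed formula.
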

\begin{proof}
  From \eqref{eq:KKTgrady} and \eqref{eq:KKTcomply} we have 
  ${\eta_i=\invc_i-\sum_t\mu_{i,t}}$. 
  Condition \eqref{eq:KKTgradx} gives us
  \begin{align*}
    \mu_{i,t}=p_t(\bar{x}_t)-\varc_{i,t}(u)+\phi_{i,t},
      \quad i\in\Nc,t\in\T.
  \end{align*}
Thus, \eqref{eq:Whyp2} is equivalent to 
  \begin{align*}
    \invc_i-\min_{u,x,y,\mu,\phi,\chi,\eta}~
      &\sum_{t\in\T}\left(p_t(\bar{x}_t)
      -\varc_{i,t}(u)+\phi_{i,t}\right)\\
    \st~&(x,y,\mu,\phi,\chi,\eta)\text{ solve }\eqref{eq:RobustWel-fixedy-KKT}\\
      &u\in\Unc.
  \end{align*}
\end{proof}
\begin{theorem}\label{Thm:elastic:SubsidiesEq}
  An equilibrium of the market with adjustable robust producers \eqref{eq:newARobustEqSubel}
  and consumer \eqref{eq:Demandel} is given by $(\pi^*(\cdot),\eta^*,y^*,x^*(\cdot))$.
\end{theorem}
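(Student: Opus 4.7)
My plan is to show that the proposed tuple $(\pi^*(\cdot),\eta^*,y^*,x^*(\cdot))$ simultaneously solves the consumer problem \eqref{eq:Demandel} and every adjustable robust producer problem \eqref{eq:newARobustEqSubel} in every realization $u\in\Unc$; the market clearing identity $\sum_i x^*_{i,t}(u)=\bar{x}^*_t(u)$ then holds by construction of $\bar{x}^*$. The consumer's first-order condition is $p_t(d_t)=\pi_t$, and the definition $\pi^*_t(u)=p_t(\bar{x}^*_t(u))$ makes $d_t=\bar{x}^*_t(u)$ a solution.

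For each producer $i$ I would separate the inner and outer maximizations. With $y_i$ and $u$ fixed, the inner objective $\sum_t(\pi^*_t(u)-\varc_{i,t}(u))x_{i,t}$ is linear and separable subject to $0\leq x_{i,t}\leq y_i$, so a maximizer sets $x_{i,t}=y_i$ on $\{t:\pi^*_t(u)>\varc_{i,t}(u)\}$ and $x_{i,t}=0$ on $\{t:\pi^*_t(u)<\varc_{i,t}(u)\}$. I would verify that $x^*_i(u)$ already has this structure by reading \eqref{eq:KKTgradx} as $\pi^*_t(u)-\varc_{i,t}(u)=\mu_{i,t}-\phi_{i,t}$ and combining with the complementary slackness relations \eqref{eq:KKTcomplmu}--\eqref{eq:KKTcomplx}: strict positivity of the left-hand side forces $\mu_{i,t}>0$ and hence $x^*_{i,t}(u)=y^*_i$, while strict negativity forces $\phi_{i,t}>0$ and hence $x^*_{i,t}(u)=0$.

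For the outer investment step I would substitute the inner optimum. Writing $g_i(u):=\sum_t(\pi^*_t(u)-\varc_{i,t}(u))^+$, the producer's value function becomes the linear expression $y_i\cdot\bigl[\min_{u\in\Unc}g_i(u)-(\invc_i-\eta^*_i)\bigr]$, so any $y^*_i\geq 0$ is optimal precisely when the bracket vanishes. To close the argument I need the identity $g_i(u)=\sum_{t\in\T_{i,u}}(\pi^*_t(u)-\varc_{i,t}(u))$ for every $u$. This follows from the two inclusions $\{t:\pi^*_t(u)>\varc_{i,t}(u)\}\subseteq\T_{i,u}\subseteq\{t:\pi^*_t(u)\geq\varc_{i,t}(u)\}$, each a direct consequence of \eqref{eq:KKTgradx} together with one of the complementary slackness conditions, so that every index in the symmetric difference contributes zero to $g_i(u)$. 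Combined with \Cref{lemma:eta-worstcase} this yields $\invc_i-\eta^*_i=\min_{u\in\Unc} g_i(u)$, as required.

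The main subtlety is the boundary case $y^*_i=0$, which is not covered by \Cref{lemma:eta-worstcase}. I would treat it by returning to the definition \eqref{eq:Whyp2}: for $y^*_i=0$ every $\mu_{i,t},\phi_{i,t},\chi_i$ is unconstrained by the complementary slackness conditions, and maximizing $\eta_i=\invc_i-\sum_t\mu_{i,t}-\chi_i$ over the KKT multipliers for a fixed $u$ forces $\chi_i=0$ and $\mu_{i,t}=(\pi^*_t(u)-\varc_{i,t}(u))^+$; the outer maximization over $u\in\Unc$ then reproduces $\invc_i-\eta^*_i=\min_{u\in\Unc} g_i(u)$. A second, purely technical point is that $x^*(u)$ need not be unique, but any two maximizers of \eqref{eq:RobustWel-fixedy} can differ in the set $\T_{i,u}$ only on indices with $\pi^*_t(u)=\varc_{i,t}(u)$, so the quantity appearing in \Cref{lemma:eta-worstcase} is invariant under the choice of representative, making both the lemma and the outer-optimality argument well-posed.
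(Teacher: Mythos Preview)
Your argument is correct and follows essentially the same route as the paper: verify the inner production optimum via the KKT identities \eqref{eq:KKTgradx}, \eqref{eq:KKTcomplmu}, \eqref{eq:KKTcomplx}, then use \Cref{lemma:eta-worstcase} to settle the investment choice. The organization differs slightly. The paper first shows the worst-case profit at $y^*_j$ equals zero and then, for a deviation $y'_j$, evaluates the profit at a fixed worst-case scenario $u_j^{\min}$ to obtain an upper bound of zero; you instead collapse the producer's value function to $y_i\bigl[\min_{u}g_i(u)-(\invc_i-\eta^*_i)\bigr]$ and show the bracket vanishes, which handles all $y_i$ at once. Your treatment is in fact a bit more careful in two places: the paper invokes \eqref{eq:lemmaeta} in its deviation step without restriction, although \Cref{lemma:eta-worstcase} is stated only for $y^*_i>0$, so your direct computation of $\eta^*_i$ from \eqref{eq:Whyp2} in the boundary case $y^*_i=0$ closes a small gap; and your remark that non-uniqueness of $x^*(u)$ affects $\T_{i,u}$ only at indices with $\pi^*_t(u)=\varc_{i,t}(u)$ (hence leaves the sum in \eqref{eq:lemmaeta} invariant) supplies a well-definedness check the paper omits. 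You also make the consumer optimality explicit, which the paper leaves implicit in the definition $\pi^*_t(u)=p_t(\bar{x}^*_t(u))$.
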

\begin{proof}
We prove that $(\pi^*(\cdot),\eta^*,y^*,x^*(\cdot))$ is an equilibrium by showing that
no producer $j$ has an incentive to deviate from $y^*_j$ or $x^*_j(\cdot)$.

First, we show that given $\pi^*(\cdot),\eta^*$, and $y^*$, a producer $j$ with $y^*_j>0$ 
would not produce differently than $x^*_j$. 
Let $u$ be arbitrary but fixed and let $t\in \T$.
From \eqref{eq:KKTgradx}, \eqref{eq:KKTcomplmu}, \eqref{eq:KKTcomplx}, and the
nonnegativity of $x^*,y^*,\mu^*$, and $\phi^*$ we obtain
\begin{align}\label{eq:Subsidies-x}
    x^*_{j,t}=
      \begin{cases}
         y^*_j & \text{ if }\quad p_t(\bar{x}^*_t) - \varc_{j,t}(u) > 0, \\ 
         0 & \text{ if }\quad p_t(\bar{x}^*_t) - \varc_{j,t}(u) < 0, \\
         \text{arb.}\in [0,y^*_j] & \text{ if }\quad p_t(\bar{x}^*_t) - \varc_{j,t}(u) =0.
      \end{cases}
\end{align}
Since $\pi^*_t(u)=p_t(\bar{x}^*_t)$, this implies
\begin{align*}
  (\pi^*_t(u)-\varc_{j,t}(u))x'_{j,t}\leq (\pi^*_t(u)-\varc_{j,t}(u))x^*_{j,t}
\end{align*}
for any $0\leq x'_j\leq y^*_j$ with $x'_{j,t}(u)\neq x^*_{j,t}(u)$. 
Thus, by deviating from $x^*_j$, producer $j$'s profit cannot be increased.\\

\noindent Next, we show that the worst case profit of every producer is zero. 
Clearly, this holds true for all producers with zero capacity. 
Let $i$ be a producer with $y^*_i>0$. Their objective value in scenario $u\in \Unc$,
which we denote by $P^*_i(u)$, is given by
\begin{align*}
  P^*_i(u)=\sum_{t\in\T}\left((\pi^*_t(u)-\varc_i(u)) x^*_{i,t}\right)
    -(\invc_i-\eta^*_i) y^*_i
\end{align*}
From \eqref{eq:Subsidies-x} it follows that $x^*_{i,t}$ is either $0$ or $y^*_i$ or it
does not influence the objective value. 
Thus, with 
$\T_{i,u}=\{t\in\T\defsep p_t(\bar{x}^*_t) > \varc_{j,t}(u)\}$, we have
\begin{align*}
  P^*_i(u)=\sum_{t\in\T_{i,u}}\left((\pi^*_t(u)-\varc_i(u)) y^*_i\right)
    -(\invc_i-\eta^*_i) y^*_i
\end{align*}
Thus, for the worst case profit we have
\begin{align*}
  \min_{u\in\Unc}P^*_i(u)=
    \min_{u\in\Unc}\left\{\sum_{t\in\T_{i,u}}\left((\pi^*_t(u)-\varc_i(u)) 
      y^*_i\right)\right\}
    -(\invc_i-\eta^*_i) y^*_i.
\end{align*}
Inserting \eqref{eq:lemmaeta} from \Cref{lemma:eta-worstcase} for $\eta^*_i$ now yields 
\begin{align*}
  \min_{u\in\Unc}P^*_i(u)=0.
\end{align*}\ \\

\noindent Finally, we consider the case in which a producer $j$ deviates from $y^*_j$. 
First, let $u_j^{\min}\in\argmin_{u\in\Unc}P^*_j(u)$.
Now assume that producer $j$ invests in capacity $y'_j\neq y^*_j$.

The profit of producer $j$ in $u_j^{\min}$ is given by
\begin{align*}
  P'_j(u_j^{\min})
    &=\max_{0\leq x_j\leq y'_j}\{\sum_{t\in\T}((\pi_t(u_j^{\min})
      -\varc_j(u_j^{\min})) x_{j,t})\} 
      -(\invc_j-\eta^*_j) y'_j\\
    &=\sum_{t\in\T_{i,u}}\left((\pi^*_t(u_j^{\min})-\varc_j(u_j^{\min}))y'_j\right)
      -(\invc_j-\eta^*_j) y'_j
\end{align*}
Inserting \eqref{eq:lemmaeta} from \Cref{lemma:eta-worstcase} for $\eta^*_j$ yields
$P'_j(u_j^{\min})\leq 0$. 
Since $P'_j(u_j^{\min})$ is an upper bound for the worst case profit of producer $j$,
this means that deviating from $y^*_j$ does not increase the worst case profit of 
producer~$j$.
\end{proof}

This proof concludes the section on robust peak load pricing. We showed how to compute equilibria in a 
market with strict robust producers and \rev{proved the possible existence of a gap} to the corresponding robust central planner solutions.
In the adjustable robust setting, we showed that the adjustable robust central planner cannot improve
on the strict robust central planner. Finally, \revv{this last theorem showed} how to 
offer an incentive via subsidies such that the equilibrium in the market with adjustable robust producers is equal to the robust central planner solution.


\section{Consequences for optimizing Risk measures}
\label{sec:var}

\rev{Different approaches are known in optimization under uncertainty.} Robust optimization is well-suited in cases where one has little knowledge about the shape and structure of the uncertainties and additionally the market participants are very risk averse. 
\rev{In cases where full information about the underlying uncertainties is available, e.g. if the probability distribution $F$ of the uncertain variables $u_{i,t}\in \Unc$ are known, and if the stochastic problem is algorithmically tractably, we could replace the robust optimization approach in \eqref{eq:RobustEqCP} and \eqref{eq:RobustW} by a stochastic optimization problem. For \eqref{eq:RobustEqCP} with the expectation instead of the worst-case realization in the objective, this would read
\begin{align}\label{eq:Stochex}
	\min_{x,y\geq 0}~&\sum_{i\in \Nc}(\invc_iy_i
	+(\sum_{t\in \T}(\varc_i+a_{i,t}E(u_{i,t}))x_{i,t}))\\
	\st~&x_{i,t}\leq y_i,\quad\quad i\in \Nc,t\in \T,\notag\\
	&\sum_{i\in \Nc}x_{i,t}=d_t,\quad\quad t\in \T.\notag
\end{align}
Here, the expectation essentially plays the role of a risk measure for which protection is sought by the (not very risk-averse) market participants. Still, there is a close connection to robust optimization. Indeed, it has been shown in e.g. \cite{Bertsimas2009a}, that robust optimization can be used to analyze problems involving risk measures.  With the techniques presented below, we can also exploit} limited information on the (generally unknown) probability distribution $F: \R^N_{\geq 0}\rightarrow [0,1]$ of nonnegative random variables $u_i$ is available. The additional information on $F$ is going to inform the shape of the underlying uncertainty set $\Unc$ against which the producers and the central planner seek to protect themselves. 

To capture the risk aversity of the market participants we rely on a concept that is widely considered in the economic literature -- the \emph{value at risk} (VaR). \rev{The value at risk at the confidence level $1-\alpha$ is the $(1-\alpha)$-quantile of a loss function $U\in \R$, i.e. a random variable $U$ that measures the loss of a market participant. Suppose that this loss function is distributed by $F_U$, then the value at risk at a confidence level of $1-0.05$ is $\text{VaR}_{0.95}(U)=F_U^{-1}(0.95)$. In the following lines we omit $U$ and solely consider the (potentially unknown) distribution $F$. It is worth noting that information on the value at risk can be achieved without full knowledge on $F$. In particular, }in \cite{ElGhaoui2002a} the authors consider uncertain distributions and compute their worst-case value at risk without deriving the underlying distribution. 

\rev{Here, we suppose that the value at risk is known and }the central planner seeks to protect against this risk measure \rev{ instead of e.g. the expected value considered in \eqref{eq:Stochex}. As we deal with multivariate uncertainties we extend the above definition to a \emph{multivariate value at risk} (MVaR) defined} by Pr\'ekopa (\cite{Prekopa1990a} for discrete probability distributions, \cite{Prekopa2012a} for continuous distributions):
$$\mvar_{1-\alpha}\coloneqq \{u\in \R^N: F(u)=1-\alpha \}.$$
Since our random variables are nonnegative, a natural definition of the global uncertainty set $\Unc$ is
$$\Unc\coloneqq \{u\in \R^N_{\geq 0}: F(u)\leq 1-\alpha\}.$$
As a probability distribution, $F$ is monotonously increasing. Hence, on the one hand it is quasiconvex implying the convexity of its sublevel set $\{u\in \R^N: F(u)\leq 1-\alpha\}$ and consequently the convexity of $\Unc$. On the other hand the monotonicity restricts the $u_i$ from above and implies compactness of $\Unc$. Moreover, we can ensure that its projections on the unit vectors $e_i$ form the interval $[0,1]$ by rescaling $\Unc$ to $\diag(\var^1_{1-\alpha},\ldots , \var^N_{1-\alpha})^{-1} \Unc$ and thus have established the necessary conditions needed to directly apply the theory above for a predefined uncertainty set $\Unc=\{u\in \R^N_{\geq 0}: u\leq \mvar_{1-\alpha}\}$. 

\revv{The following paragraphs illustrate this} at the example of the robust peak load pricing with fixed demand from Section \ref{sec:fixed-demand}. Here, we let the random part of the variable costs $u$ be distributed according to the distribution $F$. Hence, the $u_i$ are distributed with respect to the marginal distributions $F_i$ and the producers aim to optimize the worst case uncertainty in $\Unc_i=\{u_i\in \R_{\geq 0}: F_i(u_i)\leq 1-\alpha\}$, which we will denote (in slight abuse of notation) as the marginal value at risk $\var_{1-\alpha}^i$. We obtain the following variant of \eqref{eq:RobustEq}:
\begin{subequations}
  \begin{align}
  \max_{x_i,y_i\geq 0}~&\sum_{t\in \T}((\pi_t-(\varc_i+\var_{1-\alpha}^i))x_{i,t})
    -\invc_iy_i\\
  \st~&x_{i,t}\leq y_i,\quad\quad t\in \T.
  \end{align}
\end{subequations}
Observe that the rescaled $\Unc$ satisfies our necessary assumptions and setting $a_i=\var^i_{1-\alpha}$ negates the rescaling effect on the definitions of the worst case total cost
\begin{align*}
  E_R=\max_{u\in\Unc} \sum_{i\in \Nc}(\invc_iy^*_i
    +(\sum_{t\in \T}(\varc_i+u_{i,t})x^*_{i,t}))
\end{align*}
of a market equilibrium $(x^*,y^*)$ in \eqref{eq:RobustEqCP} 
and the corresponding robust central planner problem
\begin{subequations}
  \begin{align}
    \RobWel=\min_{x,y\geq 0}~&\max_{u\in\Unc}~\sum_{i\in \Nc}(\invc_iy_i
      +(\sum_{t\in \T}(\varc_i+u_{i,t})x_{i,t}))\\
    \st~&~x_{i,t}\leq y_i,\quad\quad i\in \Nc,t\in \T,\\
    &\sum_{i\in \Nc}x_{i,t}=d_t,\quad\quad t\in \T.
  \end{align}
\end{subequations}
Thus, we obtain the following corollary of Theorem \ref{Thm:fixed:main-bound}:

\begin{corollary}
\rev{Let $\Unc=\{u\in \R^N_{\geq 0}: u\leq \mvar_{1-\alpha}\}$. Then} the following inequalities are satisfied
 \begin{align*}
    C_R\leq E_R \leq \frac{1}{\tau (\diag(\var^1_{1-\alpha},\ldots , \var^N_{1-\alpha})^{-1}\Unc)} \RobWel.
  \end{align*}
\end{corollary}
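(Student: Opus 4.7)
The corollary is essentially a direct application of \Cref{Thm:fixed:main-bound} to the value-at-risk-based uncertainty set, so the plan is to verify the hypotheses of that theorem for the appropriate rescaled set and then invoke it. The first step is to identify the correctly normalized uncertainty set, namely
\begin{align*}
  \tilde{\Unc} \;:=\; \diag(\var^1_{1-\alpha},\ldots,\var^N_{1-\alpha})^{-1}\Unc,
\end{align*}
and then argue that $\tilde{\Unc}$ satisfies all structural assumptions demanded in \Cref{sec:fixed-demand} for an uncertainty set, so that \Cref{Thm:fixed:main-bound} applies with scaling factors $a_i = \var^i_{1-\alpha}$.

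Verifying the hypotheses is essentially a matter of collecting what has already been argued in the preceding paragraphs: convexity of $\Unc$ follows from quasiconvexity of the distribution function $F$; compactness follows from monotonicity of $F$ combined with nonnegativity of the variables; containment of the origin holds because $F(0)\leq 1-\alpha$ (again by monotonicity); and the projections of $\tilde{\Unc}$ onto each coordinate axis coincide with $[0,1]$ because, setting the other coordinates to zero, the marginal constraint $F_i(u_i)\leq 1-\alpha$ becomes $u_i\leq \var^i_{1-\alpha}$, which after rescaling is exactly $[0,1]$. For the time-period extension one takes $\tilde{\Unc}' = \tilde{\Unc}$ and the Cartesian product $\tilde{\Unc}^{\T} = \prod_{t\in\T}\tilde{\Unc}'$, so the product structure assumed in \Cref{sec:fixed-demand} is automatic.

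Once the hypotheses are in place, the identification of uncertain cost is $\varc_i + a_i u_{i,t}$ with $u \in \tilde{\Unc}^{\T}$ and $a_i = \var^i_{1-\alpha}$; this substitution precisely reproduces the worst case total cost $E_R$ and the robust central planner value $\RobWel$ displayed in the statement, because the factor $a_i=\var^i_{1-\alpha}$ cancels the rescaling by $\diag(\var^1_{1-\alpha},\ldots,\var^N_{1-\alpha})^{-1}$. Applying \Cref{Thm:fixed:main-bound} to this instance then yields both
\begin{align*}
  \RobWel \;\leq\; E_R \;\leq\; \frac{1}{\tau(\tilde{\Unc})}\,\RobWel,
\end{align*}
which is exactly the claimed chain of inequalities.

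The only step with any real content is the verification of the projection property for $\tilde{\Unc}$, since it relies on the specific form of $\Unc$ as a sublevel set of a joint distribution rather than on a product structure. The key observation there is that the inequality $F(u_i e_i) \leq F_i(u_i)$ together with the monotonicity of $F$ guarantees that the axis-aligned witness $u_i e_i$ lies in $\Unc$ for every $u_i\in[0,\var^i_{1-\alpha}]$, so nothing more than monotonicity of $F$ is needed. Everything else is bookkeeping.
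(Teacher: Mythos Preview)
Your proposal is correct and follows essentially the same approach as the paper: the paper does not give a separate formal proof of the corollary but instead argues in the preceding paragraphs that the rescaled set $\diag(\var^1_{1-\alpha},\ldots,\var^N_{1-\alpha})^{-1}\Unc$ satisfies the structural assumptions of \Cref{sec:fixed-demand} (convexity from quasiconvexity of $F$, compactness from monotonicity, projections equal to $[0,1]$ after rescaling) and that setting $a_i=\var^i_{1-\alpha}$ undoes the rescaling in the cost model, after which \Cref{Thm:fixed:main-bound} applies directly. Your write-up reproduces exactly this line of reasoning, with slightly more detail on the projection property.
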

We note that although the scaling matrix $(\diag(\var^1_{1-\alpha},\ldots , \var^N_{1-\alpha})^{-1}$ and $\mvar_{1-\alpha}$ share the same underlying distribution it is not trivial to simplify the above result by exploiting this relationship. We further add that similar corollaries can also be obtained for the other results in Sections \ref{sec:fixed-demand} and \ref{sec:elastic-demand}.

One major criticism of the value at risk is that it is, although widely used, not a coherent risk measure, i.e. it lacks the desirable properties derived by Artzner \cite{Artzneretal:1999}. However, for nonnegative random variables also coherent risk measures can partly be included in our theory as they are related to (distributionally) robust optimization through the following statement (see \cite{Bertsimas2014a} or Theorem 3.1.1 in \cite{Brown2006a}):

A risk measure $\mu:\mathcal{X}\rightarrow \R$ is coherent if and only if there exists a family of probability measures $\mathcal{Q}$ such that 
$$\mu(X)=\sup_{q\in \mathcal{Q}}E_q(X)\text{ for every random variable }X\in \mathcal{X},$$
\rev{where $E_q$ denotes the expected value with respect to the probability distribution $q$.}
Thus, suppose the producers aim to optimize their respective risk measures $\mu_i:\R_{\geq 0}\rightarrow \R$ acting on the $i$-th component of the random vector $u\in [0,1]^N$, whereas the central planner aims to optimize a risk measure $\mu:\R_{\geq 0}^N\rightarrow \R$ acting on the whole random vector $u$. It is natural to assume that the choice of the risk measures and its underlying set $\mathcal{Q}$ is data driven from a set of past realizations of uncertainty $\mathcal{A}=\{\hat{u}^1,\ldots , \hat{u}^K\}$.
Similarly, as we have done for the value at risk, our goal for the remainder of this section is to follow the arguments in \cite{Brown2006a} to link the peak load pricing with respect to the risk measures $\mu$ to our results from Sections \ref{sec:fixed-demand} and \ref{sec:elastic-demand} by determining a convenient uncertainty set $\Unc$. 

For a fair comparison we assume that both, the central planner and the producer evaluate their risk based on the same set of probability distributions $\mathcal{Q}$ that is supported solely on $\mathcal{A}$ (see Assumption 3.2.1 in \cite{Brown2006a} or \cite{Bertsimas2014a}). In addition we assume $\hat{u}^1=0$. Hence, instead of \eqref{eq:RobustEq}, the producers aim to optimize
\begin{subequations}\label{eq:coherent_peak_load_pricing_game}
  \begin{align}
  \max_{x_i,y_i\geq 0}~&\sum_{t\in \T}((\pi_t-(\varc_i+\mu_i(u_i) ))x_{i,t})
    -\invc_iy_i\\
  \st~&x_{i,t}\leq y_i,\quad\quad t\in \T,
  \end{align}
\end{subequations}
which according to Theorem 3.2.1 in \cite{Brown2006a} is equivalent to
\begin{subequations}
  \begin{align}
  \max_{x_i,y_i\geq 0}\min_{u_i\in \Unc_i}~&\sum_{t\in \T}((\pi_t-(\varc_i+u_i))x_{i,t})
    -\invc_iy_i\\
  \st~&x_{i,t}\leq y_i,\quad\quad t\in \T,
  \end{align}
\end{subequations}
where, since $\hat{u}^1=0$, the considered $\Unc_i$ are \rev{intervals} that satisfy \begin{align*}
    \Unc_i & = \text{conv}\left(\sum_{j=1}^K q_j\hat{u}^j_i: q\in \mathcal{Q}\right)=\left[0,\max_{q\in \mathcal{Q}} \sum_{j=1}^K q_j\hat{u}^j_i\right]\\
    & \subseteq \text{conv} (\{\hat{u}_i^1,\ldots , \hat{u}_i^K\})=\left[0,\max_{j=1,\ldots, K}\hat{u}_i^j\right].
\end{align*}
Moreover, if we denote $m_i\coloneqq\max_{q\in \mathcal{Q}} \sum_{j=1}^K q_j\hat{u}^j_i$, we can apply the same rescaling trick as we did for the value at risk and obtain an uncertainty set 
$$\Unc=\diag(\frac{1}{m_1},\ldots, \frac{1}{m_N})\text{conv}\left(\sum_{j=1}^K q_j\hat{u}^j: q\in \mathcal{Q}\right)$$
that satisfies the necessary conditions at the beginning of Section \ref{sec:fixed-demand}, i.e., it is \rev{a polytope and its projection on any coordinate axis is $[0,1]$}. Thus, we obtain the following similar corollary of Theorem \ref{Thm:fixed:main-bound}:

\begin{corollary}
Given a set of probability distributions $\mathcal{Q}$ and the uncertainty set $\Unc=\text{conv}\left(\sum_{j=1}^K q_j\hat{u}^j: q\in \mathcal{Q}\right)$, then the following inequalities are satisfied
 \begin{align*}
    C_R\leq E_R \leq \frac{1}{\tau (\diag(\frac{1}{m_1},\ldots, \frac{1}{m_N})\Unc)} \RobWel.
  \end{align*}
\end{corollary}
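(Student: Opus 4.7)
The plan is to reduce the corollary to a direct invocation of \Cref{Thm:fixed:main-bound}, with the rescaled set $\tilde{\Unc}\coloneqq \diag(1/m_1,\ldots,1/m_N)\Unc$ playing the role of the abstract uncertainty set and the scaling factors chosen as $a_i=m_i$. Under this identification, the uncertain variable cost $c_i+u_i$ with $u\in\Unc$ becomes $c_i+a_i\tilde u_i$ with $\tilde u\in\tilde\Unc$, which is exactly the cost parametrization used throughout \Cref{sec:fixed-demand}. The producer problem \eqref{eq:coherent_peak_load_pricing_game}, after applying the equivalence from Theorem~3.2.1 in \cite{Brown2006a} invoked in the preceding paragraph, then coincides with the strict robust producer problem over $\tilde\Unc$, and the central planner value $\RobWel$ in the statement matches the $\RobWel$ of \eqref{eq:RobustW} applied to $\tilde\Unc$.

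The first verification step is to check that $\tilde\Unc$ satisfies the standing assumptions in \Cref{sec:fixed-demand}: it must be a convex polytope contained in $[0,1]^N$ containing the origin and with full projection $[0,1]$ onto every coordinate axis. Convexity and polytopality are inherited from $\Unc$, which is the image of the polytope $\mathcal{Q}$ under the affine map $q\mapsto \sum_j q_j\hat u^j$, followed by diagonal rescaling. Containment in $[0,1]^N$ is immediate from the definition of $m_i=\max_{q\in\mathcal{Q}}\sum_j q_j\hat u^j_i$. The assumption $\hat u^1=0\in\mathcal{A}$ gives $0\in\Unc$ (via any probability vector concentrated on the first atom), hence $0\in\tilde\Unc$.

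The step I expect to require the most care is the projection condition. For each $i$, by the very definition of $m_i$, there exists $q^{(i)}\in\mathcal{Q}$ achieving $\bigl(\sum_j q^{(i)}_j\hat u^j\bigr)_i=m_i$, so that the $i$-th coordinate of $\tilde\Unc$ attains the value $1$. Combining this extremal point with $0\in\tilde\Unc$ and invoking convexity of $\tilde\Unc$ yields that the projection onto the $i$-th axis contains the full segment $[0,1]$, and the upper bound from the previous sentence shows it equals $[0,1]$. This argument uses convexity of $\mathcal{Q}$ and the inclusion $\hat u^1=0\in\mathcal{A}$ essentially; without the degenerate distribution on the origin, one would only obtain a shifted interval.

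Once these preliminaries are in place, the conclusion is immediate: \Cref{Thm:fixed:main-bound} applied to the instance with uncertainty set $\tilde\Unc$ and deviation scaling $a_i=m_i$ yields
\begin{align*}
  \RobWel\;\leq\; E_R\;\leq\; \frac{1}{\tau(\tilde\Unc)}\,\RobWel,
\end{align*}
which is precisely the claimed bound since $\tau(\tilde\Unc)=\tau\bigl(\diag(1/m_1,\ldots,1/m_N)\Unc\bigr)$. No further argument is needed; the entire content of the corollary is to identify the right rescaling so that the general theorem applies verbatim.
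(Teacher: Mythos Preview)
Your proposal is correct and mirrors the paper's approach exactly: the paper does not give a separate proof but simply notes, in the discussion preceding the corollary, that the rescaled set $\diag(1/m_1,\ldots,1/m_N)\Unc$ satisfies the standing assumptions of \Cref{sec:fixed-demand} (polytope, $0$ contained, projections $[0,1]$) and then states the corollary as a direct consequence of \Cref{Thm:fixed:main-bound}. Your write-up fills in the verification of those assumptions more carefully than the paper does, but the strategy is identical.
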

Again, similar corollaries can also be derived for the other results in Sections \ref{sec:fixed-demand} and \ref{sec:elastic-demand} as the key to these applications is simply to construct a valid uncertainty set $\Unc$. Although a further investigation into the connections of risk measures to our uncertainty sets appears to be very promising, it exceeds the scope of the present article and is thus postponed to future research. \rev{To summarize, after having obtained structural insights for both single-stage and two-stage robust market problems under uncertain cost together with the corresponding implications for the value at risk, the subsequent section concludes this work with a brief summary of the results obtained here.}


\section{Conclusion}
\label{sec:conclusion}

In the present article, we considered equilibrium problems where firms in a market context maximize their profits in a robust way when selling their output. 
Our analysis brings the robust optimization perspective into the context of equilibrium problems. In our setup we first considered the single-stage or non-adjustable robust setting where firms can sell their output. We then went one step further and studied the more complex adjustable case where a part of the variables are wait-and-see decisions. 
We compared equilibrium outcomes with the corresponding centralized robust optimization problem where the sum of all profits is maximized. 

We established existence of the resulting robust equilibrium problems and also determined the solution of the corresponding robust central planner. 
We showed that the market equilibrium for the perfectly competitive firms differs from the solution of the centralized optimization problem. 
For the different scenarios considered we furthermore are able to determine the resulting price of anarchy.
In the case of non-adjustable robustness, for fixed demand in every time step the price of anarchy is bounded. 
It is unbounded if market demands for the different time periods are modelled to be elastic. 
As a direct application of \revv{the} results we considered settings where the market participants aim to optimize their respective risk measures, instead of their worst-case production costs. 
The risk measures of the market participants then determine the uncertainty set for our robust problem. 
We could show that all our results can be directly applied to these settings. 
In sum, our analysis thus provides first important insights on bringing robust optimization approaches in the context of market equilibrium problems.


\section*{Acknowledgments}
\label{sec:acknowledgements}
We are grateful to Veronika Grimm for stimulating discussions. 
This research has been performed as part of the Energie Campus
Nürnberg (EnCN) and is supported by funding of the Bavarian State
Government.
The authors thank the Deutsche Forschungsgemeinschaft for their
support within projects B06, B07 and B09 in the
Sonderforschungsbereich/Transregio 154 \enquote{Mathematical
  Modelling, Simulation and Optimization using the Example of Gas
  Networks}.

\section*{Declarations of interest} None

\bibliographystyle{plainnat}

\end{document}